\newcolumntype{2}{D{.}{}{2.0}}
   \def\<{{\langle}} 
  \def\>{{\rangle}}
  \def\note#1{{}}
  \def\note#1{}
  \def\beq{\begin{equation}} 
  \def\eeq{\end{equation}}
  \newcounter{zlist}
  \newcounter{blist}
  \newcounter{rlist}
\def\stac#1{\raise-.2cm\hbox{$\stackrel{\displaystyle\otimes}{\scriptscriptstyle{#1}}$}}
\def\cten#1{\raise-.2cm\hbox{$\stackrel{\displaystyle\reallywidehat{\otimes}}
{\scriptscriptstyle{#1}}$}}
  \def\Label#1{\label{#1}\ifmmode\llap{[#1] }\else 
  \marginpar{\smash{\hbox{\tiny [#1]}}}\fi} 
  \def\Label{\label}
  \newtheorem{proposition}{Proposition}[section]
  \newtheorem{lemma}[proposition]{Lemma} 
  \newtheorem{corollary}[proposition]{Corollary} 
  \newtheorem{theorem}[proposition]{Theorem} 
\theoremstyle{definition} 
  \newtheorem{definition}[proposition]{Definition}
  \newtheorem{example}[proposition]{Example}
  \theoremstyle{remark} 
  \newtheorem{remark}[proposition]{Remark}
  \newcounter{c} 
  \newcommand{\etyk}[1]{\vspace{-7.4mm}$$\begin{equation}\Label{#1} 
  \addtocounter{c}{1}} 
  \renewcommand{\]}{\ifnum \value{c}=1 $$\else \end{equation}\fi} 
   \numberwithin{equation}{section}
\newcommand{\LeftB}[1]{\,\stackon[-1.6ex]{\scalebox{.8}{\,\,$#1$}}{\scalebox{2.2}{$\triangleleft$}}\,}
\newcommand{\MiddleB}[1]{\,\stackon[-1.3ex]{\scalebox{.8}{$#1$}}{\scalebox{1.2}{$\square$}}\,}
\newcommand{\RightB}[1]{\,\stackon[-1.6ex]{\scalebox{.8}{$#1$\,\,}}{\scalebox{2.2}{$\triangleright$}}\,}
\def\*C{{}^*\hspace*{-1pt}{\Cc}}
\def\text#1{{\rm {\rm #1}}}
\newcommand\reallywidehat[1]{%
\savestack{\tmpbox}{\stretchto{%
  \scaleto{%
    \scalerel*[\widthof{\ensuremath{#1}}]{\kern.1pt\mathchar"0362\kern.1pt}%
    {\rule{0ex}{\textheight}}%WIDTH-LIMITED CIRCUMFLEX
  }{\textheight}% 
}{2.4ex}}%
\stackon[-6.9pt]{#1}{\tmpbox}%
}
\begin{document}

\title{Biunit pairs in semiheaps and associated semigroups}

\begin{abstract}
Biunit pairs are introduced as pairs of elements in a semiheap that generalize the notion of unit. Families of functions generalizing involutions and conjugations, called switches and warps, are investigated. The main theorem establishes that there is a one-to-one correspondence between monoids equipped with a particular switch and semiheaps with a biunit pair. This generalizes a well-established result in semiheap theory that connects involuted semigroups and semiheaps with biunit elements. Furthermore, diheaps are introduced as semiheaps whose elements belong to biunit pairs and they are shown to be non-isomorphic but warp-equivalent to heaps.
\end{abstract}

\author{Bernard  Rybo{\l}owicz}
\address{(B. Rybo\l owicz) Department of Mathematics, Heriot-Watt University, Edinburgh EH14 4AS, and Maxwell Institute for Mathematical Sciences, Edinburgh, UK}
\email{B.Rybolowicz@hw.ac.uk}
\author{Carlos Zapata-Carratal\'a}
\address{(Carlos Zapata-Carratal\'a)
Edinburgh Mathematical Physics Group, United Kingdom}

%\urladdr{https://sites.google.com/view/bernardrybolowicz/}
\email{C.Zapata.Carratala@gmail.com}

\subjclass[2010]{20N10, 20M99}

\keywords{biunit, generalized associativity, monoid, semigroup, semiheap, ternary algebra}
\baselineskip=15pt
\date\today
\maketitle

\section*{Introduction}

The study of semiheaps can be traced back to the early work on semigroup theory by R.\ Baer \cite{Baer1929heap}, H.\ Pr\"ufer \cite{Prufer1924heap} and A.K.\ Su\v skevi\v c \cite{Susk1937heap}. They investigated what were later known as heaps, certain kinds of ternary algebras arising from groups and semigroups satisfying some special conditions, see definitions \ref{def:semiheap} and \ref{def:heap}. Heaps and groups are closely related. From a heap $(H,[-,-,-])$, we can construct a group $\mathrm{G}(H)$ and vice-versa. That is achieved by fixing an element $a\in H$ and currying it in the middle argument of the ternary operation $[-,-,-]$ to acquire a binary one $(x,y)\xmapsto{[-,a,-]}[x,a,y]$. Conversely, we associate a heap $\mathrm{H}(G)$ with a group $(G,\cdot)$ by taking the ternary bracket $[a,b,c]:=a\cdot b^{-1}\cdot c$, where $a,b,c\in G$. If we start with a ternary operation, these constructions compose to the original structure. In the other direction, starting with a group, we only obtain the group up to isomorphism as we forget which element is fixed, but we always find a so-called variant, see \cite{jbhickney1983}. In 1953, V. V. Wagner \cite{wagner1953theory} introduced semiheaps in the context of partial functions and relations on sets as a way to formalize the theory of coordinate charts on manifolds. A \textbf{semiheap} is a straightforward generalisation of a heap: a ternary algebra with the generalized associativity property satisfied by heaps but without any cancelling behaviour of elements, that is $(S,[-,-,-])$ such that
\begin{equation*}
    [[a,b,c],d,e]=[a,[d,c,b],e]=[a,b,[c,d,e]]
\end{equation*}
for all $a,b,c,d,e\in S$. These ternary algebras have applications to Morita equivalence \cite{lawson2011generalized}, pseudogroups\cite{kock2007principal}, affine structures \cite{hawthorn2011near} \cite{breaz2022heaps}, Lie theory and quantum mechanics \cite{brzezinski2022trusses} \cite{bruce2022semiheaps}, and hypermatrix theory \cite{abramov2009algebras} \cite{zapata2022heaps}. In contrast to groups and heaps, there is no correspondence between semiheaps and semigroups. We can associate with every semiheap a semigroup by fixing an element. For different choices, however, semigroups do not have to be isomorphic anymore. The opposite direction is not unique either, more data is required. In his seminal paper \cite{wagner1953theory}, Wagner shows that there is a correspondence between involuted monoids and semiheaps with \textbf{biunit elements}, that is elements $e\in S$ satisfying
\begin{equation*}
    [e,e,x]=x=[x,e,e]
\end{equation*}
for all $a\in S$. If there exists a biunit element, one can build an involuted monoid from a semiheap, and then reconstruct it from the monoid. Similarly, for monoids with involutive anti-homomorphism, we  construct a semiheap with biunit from an involuted monoid, and then reconstruct the monoid up to isomorphism. Our contribution is to consider a more parsimonious generalization of biunit elements in semiheaps by considering a pair $(a,b)$ such that:
\begin{equation*}
    [a,b,x]=x=[x,a,b]
\end{equation*}
which we call a \textbf{biunit pair}. The main purpose of our paper is to extend the classical result of Wagner \cite{wagner1953theory} to semiheaps with biunit pairs and determine the appropriate class of semigroups that establishes the correspondence.

We begin in Section \ref{sec:pre} by recalling some generalities about ternary operations and semiheaps. We establish our notation and conventions by introducing ternars, currying and semiheaps.

In Section \ref{sec:t-w-s}, we introduce twistings of semiheaps, i.e. a pair $S_\varphi:=(S,[-,\varphi(-),-])$ is a $\varphi$-twist of $(S,[-,-,-])$ if $S_\varphi$ is a semiheap, and function $\varphi:S \to S$ is called a twist. In Definition \ref{def:warp}, we introduce a special case of a twist called a warp. We show that in the case of abelian semiheap or existence of particular injective currying, twists coincide with warps, see Lemma \ref{lem:injsh}. We observe that the relation of differing by a bijective warp gives an equivalence relation on the set of all twistings of a semiheap. Similarly, we introduce an function $\psi:S\to S$ called a switch. A switch allows us to associate with a semigroup $(S,\cdot)$, a semiheap $(S,[-,-,-]^{\psi})$, where $[a,b,c]^\psi:=a\cdot\psi(b)\cdot c$ for all $a,b,c\in S$. If a semigroup $S$ has at least one left and one right cancellative element, switches correspond to all semiheap structures given by the bracket $[-,-,-]^\psi$, see Lemma \ref{lem:equiv-sg-sh}. In the case of abelian semiheaps, switches coincide with warps of $(S,[-,-,-]^\mathrm{Id})$ see Lemma \ref{lem:cor-switch-warp}. Proposition \ref{prop:sg:list} gives us the properties of switches for monoids. The proposition is crucial to prove the main theorem of the paper. The most important property is that if $\varphi$ is a switch for some monoid $S$ and there exist $u\in S$ such that $\varphi(u)=1$, then $u$ is invertible and $u^{-1}=\varphi(1)$.

Section \ref{sec:main} introduces biunit pairs a pairs in a semiheap $S$ in Definition \ref{def:bp}. A biunit pair is a generalisation of biunits, as any biunit element $a\in S$ (see Definition \ref{def:biunit}) belongs to a biunit pair $(a,a)$ in $S$. For curryings, lower arity functions created by fixing elements in a semiheap ternary operation, given by biunit pairs, left and right shifts give us bijective warps. That said, semiheaps $S$, $S_{\lambda_{aa}}$, $S_{\lambda_{bb}}$, $S_{\rho_{aa}}$, and $S_{\rho_{bb}}$ are warp equivalent see Corollary \ref{cor:warpequiv}. We observe that if $(S,[-,-,-])$ is a heap with a biunit pair $(a,b)$, then the currying $ c\xmapsto{\mu_{bb}} [b,c,b]$ is a switch for a monoid $(S,\MiddleB{a},b)$, where $x\MiddleB{a}y:=[x,a,y]$ for all $x,y\in S$, $b$ is an identity, and $\mu_{bb}(a)=b$, see Lemma \ref{lem:mainproof1} and corollary \ref{cor:mainproof2}. These two observations with Proposition \ref{prop:sg:list} allows us to formulate and prove our main result in Theorem \ref{thm:main}. The theorem states that there is a one-to-one correspondence between monoids with an invertible switch and semiheaps with a choice of biunit pair. We conclude our contribution by motivating the definition of \textbf{diheaps} as semiheaps whose elements belong to biunit pairs and showing that these previously unknown structures are related but are not isomorphic to heaps. Proposition \ref{prop:equiv} shows that diheaps and heaps are warp-equivalent.

\section{Preliminaries}\label{sec:pre}

Throughout this work we will consider a set $T$ together with a ternary operation, that is a map $[-,-,-]_T:T\times T\times T\to T$. We will call a pair $(T,[-,-,-]_T)$ a {\bf ternar}. Following the universal algebra approach a morphism of ternars $(T,[-,-,-]_T)$ and $(H,[-,-,-]_H)$ is a function $\varphi:T\to H$ such that $\varphi([a,b,c]_T)=[\varphi(a),\varphi(b),\varphi(c)]_H$, for all $a,b,c\in T$, see \cite{bergman2015invitation}. We shall omit index of the ternary operation and write $(T,[-,-,-]):=(T,[-,-,-]_T)$, keeping in mind on which set bracket $[-,-,-]$ is defined.

Given a ternar $(T,[-,-,-])$, lower arity binary operations arise in a natural way by currying elements into the ternary operation in different ways. By currying a single element $a$ we get the following three binary operations:
\begin{equation}\label{def:binaries}
    x \LeftB{a} y := [a,x,y] \qquad x \MiddleB{a} y := [x,a,y] \qquad x \RightB{a} y := [x,y,a] 
\end{equation}
for $x,y\in T$. For our purposes the most important will be the middle one: the pair $(T,\MiddleB{a})$ will be called \textbf{$a$-retract} of the ternar $T$. Similarly, by currying a pair of elements $(a,b) \in T\times T$ we obtain the following three unary operations:
\begin{equation}\label{def:unaries}
    \lambda_{ab}(x):=[a,b,x] \qquad \mu_{ab}(x):= [a,x,b] \qquad \rho_{ab}:=[x,a,b]
\end{equation}
for $x\in T$. These are endofunctions of the underlying set $T$, so we call them, respectively, the \textbf{left}, \textbf{middle} and \textbf{right} \textbf{shifts} of the ternar $(T,[-,-,-])$. 

In this paper we will mainly consider a particular class of ternars called semiheaps.

\begin{definition}\label{def:semiheap}
A {\bf semiheap} is a set $S$ together with a ternary operation $[-,-,-]:S\times S\times S\to S$ such that for all $a,b,c,d,e\in S$,
\begin{equation}\label{semiheap}
    [[a,b,c],d,e]=[a,[d,c,b],e]=[a,b,[c,d,e]]. 
\end{equation}
\end{definition}

\noindent Note that the middle term in (\ref{semiheap}) implies a departure from (ordinary) sequential associativity i.e. the property that notationally allows for brackets to be dropped. Consequently, property (\ref{semiheap}) has been called weak associativity \cite{wagner1951ternary}, associativity of the second kind \cite{carlsson1976cohomology}, quasi-associativity \cite{kolar2000heap}, para-associativity \cite{hawthorn2009radical}, type B associativity \cite{kerner2008ternary} and pseudo-associativity \cite{hollings2017wagner}.

\begin{definition}
Given a semiheap $(S,\overline{[-,-,-]})$, a simple check shows that if the ternary operation $[-,-,-]$ satisfies (\ref{semiheap}) then so does $\overline{[a,b,c]}:=[c,b,a]$; $(S,\overline{[-,-,-]})$ is called the \textbf{reverse semiheap}. A semiheap $(S,[-,-,-])$ is called \textbf{abelian} when $[-,-,-]=\overline{[-,-,-]}$.
\end{definition}

\begin{example}
The classical example of a semiheap is the compositional structure of the set of binary relations between two arbitrary sets $\textsf{Rel}(A,B)$ given by the following ternary operation:
\begin{equation*}
    [R_1,R_2,R_3]_\textsf{Rel} := R_1 \circ R_2^\top \circ R_3
\end{equation*}
where $R_1,R_2,R_3\subset A\times B$ are binary relations, $\circ$ is the usual composition of relations and $\top$ denotes the transpose or converse relation. It is easy to see that (\ref{semiheap}) holds as a direct consequence of the associativity property of $\circ$ and the fact that $\top$ is an involution and a $\circ$-antihomomorphism. More generally, the homsets of any dagger category $(\mathcal{C},\dag)$ carry a semiheap structure via the analogous construction:
\begin{equation*}
    [f,g,h]_\mathcal{C} := f \circ g^\dag \circ h
\end{equation*}
for $f,g,h\in \mathcal{C}(A,B)$ and any two objects $A,B\in \mathcal{C}$. The commutative diagram that realizes this ternary operation in a dagger category displays a characteristic zigzag pattern:
\begin{equation}\label{dag}
\begin{tikzcd}[sep=large]
A \arrow[rr, "f", bend left] \arrow[rr, "h", bend right] & & B \arrow[ll, "g^\dag"']
\end{tikzcd} 
\end{equation}
\end{example}
\begin{example}\label{exm:sg-to-sh}
Let $G$ be a set together with an associative binary operation $\cdot$ and a function $*:G\to G$ satisfying $(a^*)^*=a$ and $(a\cdot b)^*=b^*\cdot a^*$ for all $a,b\in G$, that is, $*$ is an antihomomorphic involution, then it is easy to show that the ternary operation defined by
\begin{equation*}
    [a,b,c]_G := a \cdot b^*\cdot c
\end{equation*}
satisfies (\ref{semiheap}) thus making $(G,[-,-,-]_G)$ into a semiheap. In particular, all groups carry a canonical semiheap structure via the above construction by considering the inversion involution $*=\,^{-1}$. 
\end{example}

Note that all these examples correspond to ternary operations constructed from an associative binary operation that is \emph{twisted} by some map, in reference to the diagram (\ref{dag}).% we generally call any ternary operation defined in this way a \textbf{zigzag}.\newline

\begin{definition}\label{def:biunit}
Let $(S,[-,-,-])$ be a semiheap, we say that element $e\in S$  is a {\bf left (right) Mal'cev element} or a {\bf left (right) biunit} if for all $s\in S$
$$
[e,e,s]=s \qquad ([s,e,e]=s).
$$
If an element $e\in S$ is a left and right biunit, we call it a {\bf biunit} or {\bf Mal'cev element}.
\end{definition}

The existence of Mal'cev elements in a semiheap allows us to construct an involuted monoid.

\begin{theorem}[Involuted Monoid of a Semiheap with Biunit, Thm. 8.2.8-9 \cite{hollings2017wagner} or Thm. 2.8-9 \cite{wagner1953theory}] \label{monoidsemiheap}
Let $(S,[-,-,-])$ be a semiheap and $e\in S$ be a Mal'cev element, then the binary operation
\begin{equation*}
    a\MiddleB{e} b := [a,e,b]
\end{equation*}
and the map
\begin{equation*}
    \mu_{ee}(a) := [e,a,e]
\end{equation*}
make $(S,\MiddleB{e},\mu_{ee})$ into an involuted monoid with $e$ as identity element such that the original semiheap structure is recovered by the ternary operation:
\begin{equation*}
    [a,b,c]=a \MiddleB{e} \mu_{ee}(b) \MiddleB{e} c.
\end{equation*}
Furthermore, if there exists another biunit $u\in S$, then there is an isomorphism of involuted monoids $\rho_{eu}:(S,\MiddleB{e},\mu_{ee})\to (S,\MiddleB{u},\mu_{uu})$ given by
\begin{equation*}
    \rho_{eu}(a) := [a,e,u].
\end{equation*}
\end{theorem}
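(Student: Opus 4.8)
The plan is to verify each clause of the statement directly from the single para-associativity identity (\ref{semiheap}) together with the two biunit equations $[e,e,s]=s=[s,e,e]$, choosing at every step the instance of (\ref{semiheap}) that produces an adjacent block $[e,e,\cdot]$ or $[\cdot,e,e]$ which can then be deleted.

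First I would establish that $(S,\MiddleB{e})$ is a monoid with identity $e$. The unit laws are immediate from the biunit property: $e\MiddleB{e}b=[e,e,b]=b$ and $a\MiddleB{e}e=[a,e,e]=a$. For associativity I would write out both bracketings as triple brackets, $(a\MiddleB{e}b)\MiddleB{e}c=[[a,e,b],e,c]$ and $a\MiddleB{e}(b\MiddleB{e}c)=[a,e,[b,e,c]]$, and observe that these are precisely the outer-left and outer-right expressions in the pattern $[[-,-,-],-,-]=[-,-,[-,-,-]]$ of (\ref{semiheap}); hence they coincide with no appeal to the biunit. Next, involutivity of $\mu_{ee}(a)=[e,a,e]$ follows by rewriting $\mu_{ee}(\mu_{ee}(a))=[e,[e,a,e],e]$ via the middle equality of (\ref{semiheap}) as $[[e,e,a],e,e]$ and collapsing $[e,e,a]=a$ and then $[a,e,e]=a$. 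The recovery formula is equally mechanical: using (\ref{semiheap}) and the biunit laws one first gets $a\MiddleB{e}\mu_{ee}(b)=[a,b,e]$, and then $[a,b,e]\MiddleB{e}c=[a,b,[e,e,c]]=[a,b,c]$.

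I expect the antihomomorphism property $\mu_{ee}(a\MiddleB{e}b)=\mu_{ee}(b)\MiddleB{e}\mu_{ee}(a)$ to be the genuinely delicate point, since it is exactly here that the order-reversal distinguishing a semiheap from an associative algebra must be produced. My approach is to reduce both sides to the common value $[[e,b,e],a,e]$: the left side $[e,[a,e,b],e]$ becomes $[[e,b,e],a,e]$ by the middle equality of (\ref{semiheap}), whose reversed inner slot is precisely what interchanges the roles of $a$ and $b$, while the right side $[[e,b,e],e,[e,a,e]]$ reduces to $[[e,b,e],a,e]$ after applying (\ref{semiheap}) to draw the trailing $e$ into the middle position and collapsing $[[e,b,e],e,e]=[e,b,e]$. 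The crux is recognizing that it is the middle term of (\ref{semiheap}), not the outer associativity, that supplies the involutive reversal.

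Finally, for the second half I would treat $\rho_{eu}(a)=[a,e,u]$ with candidate inverse $\rho_{ue}(a)=[a,u,e]$, where now the biunit laws for both $e$ and $u$ are available. Each of the four required facts is a short reduction of the same flavour. That $\rho_{eu}$ sends identity to identity is $\rho_{eu}(e)=[e,e,u]=u$; bijectivity follows since the composites, e.g.\ $\rho_{ue}(\rho_{eu}(a))=[[a,e,u],u,e]$, collapse to $a$ using the middle equality of (\ref{semiheap}) and the biunit blocks $[u,u,e]=e$ and $[a,e,e]=a$; the homomorphism identity follows by reducing both $\rho_{eu}(a\MiddleB{e}b)$ and $\rho_{eu}(a)\MiddleB{u}\rho_{eu}(b)$ to the common value $[a,e,[b,e,u]]$; and compatibility with the involutions follows by reducing $\rho_{eu}(\mu_{ee}(a))$ and $\mu_{uu}(\rho_{eu}(a))$ to the common value $[e,a,u]$. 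Apart from the antihomomorphism step, the whole argument is systematic bookkeeping in the axiom (\ref{semiheap}).
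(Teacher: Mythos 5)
Your proof is correct, and every reduction you describe does go through: I checked in particular the antihomomorphism step (both sides collapse to $[[e,b,e],a,e]$ exactly as you say) and the four verifications for $\rho_{eu}$. The paper itself states this theorem as an imported classical result and gives no proof, citing Wagner and Hollings--Lawson; your direct verification from (\ref{semiheap}) and the biunit equations is essentially the standard argument found in those sources, so there is nothing to flag.
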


\begin{corollary}
There is a one-to-one correspondence between semiheaps with a chosen biunit element and involuted monoids:
\begin{equation*}
    \{ \text{semiheap} + \text{biunit} \,\, \text{element} \} \rightleftharpoons \{ \text{involuted} \,\, \text{monoid} \}
\end{equation*}
which is realized explicitly via the assignments:
$$
(S,e)\mapsto (S,\MiddleB{e},\mu_{ee})\quad \text{and} \quad (M,\cdot,1,\varphi)\mapsto ((M,[a,b,c]:=a\cdot\varphi(b)\cdot c), 1).
$$
\end{corollary}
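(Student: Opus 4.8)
The plan is to establish the claimed bijection by verifying that the two stated assignments are mutually inverse, relying on Theorem \ref{monoidsemiheap} for most of the structural content. First I would take a semiheap $(S,[-,-,-])$ with chosen biunit $e$ and apply Theorem \ref{monoidsemiheap} to produce the involuted monoid $(S,\MiddleB{e},\mu_{ee})$ with identity $e$; since $\mu_{ee}$ is shown there to be an involutive antihomomorphism, this genuinely lands in the category of involuted monoids, so the forward map $(S,e)\mapsto(S,\MiddleB{e},\mu_{ee})$ is well defined. For the reverse direction I would start from an involuted monoid $(M,\cdot,1,\varphi)$, define $[a,b,c]:=a\cdot\varphi(b)\cdot c$, and invoke Example \ref{exm:sg-to-sh} (with $*=\varphi$) to confirm that this bracket satisfies the para-associativity law \eqref{semiheap}, hence $(M,[-,-,-])$ is a semiheap; I would then check that $1$ is a biunit by computing $[1,1,x]=1\cdot\varphi(1)\cdot x=x$ and $[x,1,1]=x\cdot\varphi(1)\cdot 1=x$, using $\varphi(1)=1$, which holds because $\varphi$ is a unital antihomomorphism.

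Next I would verify the two round trips. Starting from $(S,e)$, forming the monoid and rebuilding the bracket gives $a\MiddleB{e}\mu_{ee}(b)\MiddleB{e}c=[a,[e,b,e],c]$, and the last displayed equation of Theorem \ref{monoidsemiheap} asserts precisely that this equals $[a,b,c]$; the chosen biunit $e$ is manifestly carried back to itself since it is the identity of $\MiddleB{e}$, so this round trip is the identity on $(S,e)$. For the other direction, starting from $(M,\cdot,1,\varphi)$ and passing to the semiheap with biunit $1$, the reconstructed monoid operation is $a\MiddleB{1}b=[a,1,b]=a\cdot\varphi(1)\cdot b=a\cdot b$ and the reconstructed involution is $\mu_{11}(a)=[1,a,1]=1\cdot\varphi(a)\cdot 1=\varphi(a)$, recovering the original data exactly.

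The only genuine subtlety, and the step I expect to require the most care, is bookkeeping about what ``one-to-one correspondence'' means at the level of chosen basepoints versus isomorphism classes: the second round trip recovers the original monoid \emph{on the nose} because the basepoint $1$ is distinguished, whereas the ``furthermore'' clause of Theorem \ref{monoidsemiheap} reminds us that a different choice of biunit yields an isomorphic but not identical involuted monoid. I would therefore phrase the correspondence as a bijection between isomorphism classes of semiheap-with-chosen-biunit pairs and isomorphism classes of involuted monoids, remarking that the $\rho_{eu}$ isomorphism guarantees the construction is independent of the incidental choices so that the assignments descend to well-defined mutually inverse maps. Beyond this, all remaining verifications are the routine substitutions indicated above and need no further elaboration.
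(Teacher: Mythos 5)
Your proposal is correct and follows the same route the paper takes (the paper states this corollary without proof as an immediate consequence of Theorem \ref{monoidsemiheap}, and its proof of the generalized Corollary \ref{correspondence} verifies exactly the two round-trip identities you compute). One small correction: your final paragraph's retreat to isomorphism classes is unnecessary and weakens the statement --- since the biunit element is part of the data on the semiheap side and the identity $1$ is distinguished on the monoid side, your own computations show both composites are the identity on the nose, so the correspondence is a strict bijection; the $\rho_{eu}$ clause of Theorem \ref{monoidsemiheap} is only relevant when the choice of biunit is forgotten.
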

 The connection between semiheaps and involuted monoids is, in fact, more general.
\begin{theorem}[Semiheaps Embed into Involution Monoids, Thm. 8.2.10-11 \cite{hollings2017wagner} or Thm. 2.10-11 \cite{wagner1953theory}] \label{embedsemiheap}
Any semiheap can be homomorphically embedded into a semiheap with a Mal'cev element. Therefore, it follows from Theorem \ref{monoidsemiheap} that any semiheap can be embedded into an involuted monoid.
\end{theorem}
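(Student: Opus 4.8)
The plan is to prove the two assertions in turn, observing that the second is a formal consequence of the first together with Theorem \ref{monoidsemiheap}. Suppose the first assertion is established, so that there is a semiheap monomorphism $\iota\colon S\hookrightarrow \hat S$ with $\hat S$ carrying a Mal'cev element $e$. By Theorem \ref{monoidsemiheap} the pair $(\hat S,e)$ is precisely the semiheap of an involuted monoid $(\hat S,\MiddleB{e},\mu_{ee})$, its ternary operation being recovered as $[a,b,c]=a\MiddleB{e}\mu_{ee}(b)\MiddleB{e}c$. Hence $\iota$ is already an embedding of $S$ into the semiheap underlying an involuted monoid, which is the content of the second assertion. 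So everything reduces to the first claim.

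For the first claim I would realise $S$ inside the prototypical semiheaps-with-biunit, namely the semiheaps of binary relations. For any set $X$ the homset $\textsf{Rel}(X,X)$, with $[R_1,R_2,R_3]=R_1\circ R_2^{\top}\circ R_3$, is a semiheap, and the diagonal $\Delta_X$ is a biunit, since $\Delta_X\circ\Delta_X^{\top}\circ R=R=R\circ\Delta_X^{\top}\circ\Delta_X$; thus $(\textsf{Rel}(X,X),\Delta_X)$ is a semiheap with a Mal'cev element, equivalently the involuted monoid of relations under composition and converse. It therefore suffices to construct, for a suitable set $X$, an injective semiheap homomorphism $\phi\colon S\to\textsf{Rel}(X,X)$. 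I would build this as a Cayley/Wagner-type representation: first represent $S$ inside a two-sided homset $\textsf{Rel}(A,B)$ — the zigzag (\ref{dag}) makes this the natural target — by sending each $a\in S$ to a binary relation that records the ternary operation, and then use the embedding $\textsf{Rel}(A,B)\hookrightarrow\textsf{Rel}(A\sqcup B,A\sqcup B)$ obtained by regarding $R\subseteq A\times B$ as a relation on $A\sqcup B$; composition and converse are computed set-theoretically, so this is a sub-semiheap embedding landing in a relation semiheap with biunit $\Delta_{A\sqcup B}$.

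The two things to verify are that $\phi$ is a homomorphism of ternars and that it is injective. The homomorphism condition is the single identity $\phi([a,b,c])=\phi(a)\circ\phi(b)^{\top}\circ\phi(c)$; unwinding relational composition turns each side into an existentially quantified statement about elements of $S$, and these should be matched using the para-associativity axioms (\ref{semiheap}), with the converse $(-)^{\top}$ playing the role of the reversal in the defining triple equality. Injectivity would be arranged so that distinct elements of $S$ are separated by some witnessing pair.

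The hard part will be choosing the representing relations correctly. Because a general semiheap has neither cancellation nor units, the images $\phi(a)$ cannot be taken to be graphs of total functions: relational composition carries an existential quantifier (an intermediate point), and if $\phi(a)$ were functional the composite would silently impose a cancellation law that $S$ need not satisfy, so $\phi(a)\circ\phi(b)^{\top}\circ\phi(c)$ would fail to be functional and could not equal a functional $\phi([a,b,c])$. The representation must therefore use genuine relations (or partial injections), tuned so that the existential produced by composition reproduces para-associativity \emph{exactly}, introducing no spurious pairs and dropping none, while remaining injective. This matching is the technical core of Wagner's embedding theorem; once it is in place, the embedding $\textsf{Rel}(A,B)\hookrightarrow\textsf{Rel}(A\sqcup B,A\sqcup B)$ and the appeal to Theorem \ref{monoidsemiheap} are routine.
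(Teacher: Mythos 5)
The paper does not prove this statement at all: it is quoted as a classical result with citations to Wagner and to Hollings--Lawson, so there is no in-paper argument to compare yours against. Judged on its own terms, your proposal has a genuine gap. The reduction of the second assertion to the first via Theorem \ref{monoidsemiheap} is correct (and is already asserted in the theorem statement itself), and your choice of target --- a relation semiheap $\textsf{Rel}(X,X)$ with the diagonal $\Delta_X$ as Mal'cev element --- is a legitimate Wagner-style strategy. But the entire mathematical content of the theorem is the construction of the injective ternar homomorphism $\phi$, and you never define it: you do not say what relation $\phi(a)$ actually is, and consequently neither the identity $\phi([a,b,c])=\phi(a)\circ\phi(b)^{\top}\circ\phi(c)$ nor injectivity is verified. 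You explicitly flag this as ``the technical core of Wagner's embedding theorem'' and defer it, which is an accurate self-assessment but means the proposal is a proof plan rather than a proof. Your observation about why a \emph{functional} representation must fail (relational composition introduces an existential that would force spurious cancellation) is a good piece of analysis, but identifying the obstruction is not the same as overcoming it.

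To close the gap you would need a concrete candidate, e.g.\ a regular-representation-type assignment built from the shifts $\lambda_{ab}$, $\rho_{ab}$ of \eqref{def:unaries}, viewed as relations on a set enlarged by an extra point (or an extra copy of $S$) so that distinct elements of $S$ are separated --- the enlargement is what rescues injectivity when $S$ has no cancellation, exactly as adjoining an identity does in the Cayley theorem for semigroups. One must then check the homomorphism identity against \eqref{semiheap}, where the converse $(-)^{\top}$ accounts for the reversed middle factor $[d,c,b]$; this is a nontrivial calculation, not a routine one. Alternatively, the more common textbook route is to adjoin a formal biunit $e$ to $S$ directly and define the extended ternary operation by cases, verifying para-associativity by hand; that argument is more elementary than a relational representation but is likewise a substantial case analysis. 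Either way, as written your submission establishes only the easy implication and leaves the theorem's substance unproved.
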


\begin{remark}
In the case of a semiheap $(S,[-,-,-])$ with a biunit $e\in S$, we get that $a$-retract of $S$ is a variant $(S,[e,a,e])$ of $e$-retract of $S$, see \cite{jbhickney1983}.
\end{remark}

\begin{definition}\label{def:heap}
A {\bf heap} is a semiheap $(H,[-,-,-])$ in which every element is a Mal'cev element.
\end{definition}

\begin{example}
If $(G,\cdot)$ is a group, then a pair $(G,[-,-,-])$, where $[a,b,c]=a\cdot b^{-1}\cdot c$ for all $a,b,c\in G$, is a heap.
\end{example}

Note that Theorems \ref{monoidsemiheap} and \ref{embedsemiheap} restrict to equivalent statements for heaps and groups.

\section{Twists, Warps and Switches}\label{sec:t-w-s}

In this section, we study the twistings of semiheaps. By that, we understand a method to deform a ternary operation to acquire a new semiheap. Furthermore, we research connections among semigroups, semiheaps and twisted semiheaps. Let us start by clarifying what we understand by twists.

\begin{definition}\label{def:twist}
Let $(S,[-,-,-])$ be a semiheap and $\varphi:S\to S$ be a function. We say that that $\varphi$ is a \textbf{twist} if $(S,[-,-,-]_\varphi)$, where $[a,b,c]_\varphi:=[a,\varphi(b),c]$ for all $a,b,c\in S$, is a semiheap. The semiheap $S_\varphi:=(S,[-,-,-]_\varphi)$ is called the {\bf $\varphi$-twist of the semiheap $(S,[-,-,-])$}.
\end{definition}

We will be particularly interested in semiheaps twisted by warps.

\begin{definition}\label{def:warp}
Let $(S,[-,-,-])$ be a semiheap. We say that a function $\eta:S\to S$ is a {\bf warp}  if for all $a,b,c\in S$,
$$
\eta([a,\eta(b),c])=[\eta(a),b,\eta(c)].
$$
\end{definition}

\begin{lemma}\label{lem:shifts}
Let $(S,[-,-,-])$ be a semiheap. Then for any $e\in S$, the shifts $\rho_{ee}$ and $\lambda_{ee}$ defined in \eqref{def:unaries} are warps.
\end{lemma}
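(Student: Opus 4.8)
The plan is to prove the claim for $\rho_{ee}$ by a short chain of substitutions into the defining relations \eqref{semiheap}, and then to obtain the statement for $\lambda_{ee}$ for free by passing to the reverse semiheap, thereby avoiding a second computation.

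The crux is one observation: currying $e$ twice on the right can be slid from the middle argument of a bracket to its first argument. Applying the middle equality of \eqref{semiheap} in the form $[p,[s,r,q],t]=[[p,q,r],s,t]$ with the substitution that turns $[s,r,q]$ into $[b,e,e]$ (namely $s=b$, $r=e$, $q=e$, $p=a$, $t=c$) yields
$$[a,[b,e,e],c]=[[a,e,e],b,c],\qquad\text{i.e.}\qquad [a,\rho_{ee}(b),c]=[\rho_{ee}(a),b,c].$$
This single move is the heart of the argument; everything else is bookkeeping. With it in hand I would compute
$$\rho_{ee}([a,\rho_{ee}(b),c])=\rho_{ee}([\rho_{ee}(a),b,c])=[[\rho_{ee}(a),b,c],e,e],$$
and then invoke the outer equality of \eqref{semiheap} in the form $[[p,q,r],s,t]=[p,q,[r,s,t]]$ with $s=t=e$ to push the trailing pair into the last slot:
$$[[\rho_{ee}(a),b,c],e,e]=[\rho_{ee}(a),b,[c,e,e]]=[\rho_{ee}(a),b,\rho_{ee}(c)].$$
This is precisely the warp condition of Definition \ref{def:warp}, so $\rho_{ee}$ is a warp.

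For $\lambda_{ee}$ I would exploit the reverse semiheap $(S,\overline{[-,-,-]})$, where $\overline{[a,b,c]}=[c,b,a]$. Two routine checks suffice. First, the warp condition is self-dual: rewriting $\eta(\overline{[a,\eta(b),c]})=\overline{[\eta(a),b,\eta(c)]}$ as $\eta([c,\eta(b),a])=[\eta(c),b,\eta(a)]$ and relabelling $a\leftrightarrow c$ returns the original condition, so $\eta$ is a warp for $S$ if and only if it is a warp for the reverse semiheap. Second, $\lambda_{ee}$ for $S$ is literally $\rho_{ee}$ for the reverse semiheap, since $\overline{[x,e,e]}=[e,e,x]$. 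Because the first part establishes that $\rho_{ee}$ is a warp in \emph{every} semiheap, applying it to $(S,\overline{[-,-,-]})$ and transporting back along the self-duality shows that $\lambda_{ee}$ is a warp in $S$.

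The only genuine obstacle is keeping the index matchings into \eqref{semiheap} straight: each of the three expressions must be paired with the correct pattern, and the slide identity $[a,\rho_{ee}(b),c]=[\rho_{ee}(a),b,c]$ uses the middle expression in a way that is easy to mis-substitute. Once that identity is verified, both warp claims follow from at most two further rewrites, and the $\lambda_{ee}$ case costs nothing beyond the two duality checks.
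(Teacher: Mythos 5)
Your proof is correct and for $\rho_{ee}$ it is essentially the paper's own computation: the authors compress your slide identity $[a,[b,e,e],c]=[[a,e,e],b,c]$ and the final application of outer associativity into the single chain $\rho_{ee}([a,\rho_{ee}(b),c])=[[a,[b,e,e],c],e,e]=[[a,e,e],b,[c,e,e]]$. Your reverse-semiheap duality argument for $\lambda_{ee}$ is a clean and valid way of making precise the paper's ``similarly one can show,'' but it is the same underlying symmetry rather than a genuinely different route.
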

\begin{proof}
Observe that for all $a,b,c\in S$,
$$
\rho_{ee}([a,\rho_{ee}(b),c])=[[a,[b,e,e],c],e,e]=[[a,e,e],b,[c,e,e]]=[\rho_{ee}(a),b,\rho_{ee}(c)].
$$
Thus $\rho_{ee}$ is a warp. Similarly one can show that $\lambda_{ee}$ is a warp.
\end{proof}

The following lemmata establish the connection between twists and warps.

\begin{lemma}\label{lem:sgtosh}
Let $(S,[-,-,-])$ be a semiheap and $\eta:S\to S$ be a warp. Then $S_\eta$ is a semiheap. In other words, warps are semiheap twists.
\end{lemma}
\begin{proof}
Simple check that for all $a,b,c,d,e\in S$,
$$
[[a,b,c]_\eta, d,e]_\eta=[[a,\eta(b),c],\eta(d),e]=[a,\eta(b),[c,\eta(d),e]]=[a,b,[c,d,e]_\eta]_\eta,
$$
where the second equality follows by the associativity of $[-,-,-]$. Again, using associativity of $[-,-,-]$, we get that 
$$
\begin{aligned}
[[a,b,c]_\eta,d,e]_\eta &=[[a,\eta(b),c],\eta(d),e]=[a,[\eta(d),c,\eta(b)],e]=[a,\eta([d,\eta(c),b]),e]\\ &=[a,\eta([d,c,b]_\eta),e]=[a,[d,c,b]_\eta, e]_\eta.
\end{aligned}
$$
Thus, $(S,[-,-,-]_\eta)$ is a semiheap.
\end{proof}
\noindent Although twists are more general than warps, under some conditions on the currying $\mu$, see \eqref{def:unaries}, we can obtain a sufficient condition for warps and twists to coincide in a given semiheap.
\begin{lemma}\label{lem:injsh}
Let $(S,[-,-,-])$ be a semiheap and assume $a,b\in S$ are such that $\mu_{ab}:S\to S$ is injective. Then $S_\varphi$ is a semiheap if and only if $\varphi$ is a warp.
\end{lemma}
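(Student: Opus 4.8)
The plan is to split the biconditional and notice that one half is already done. By Lemma \ref{lem:sgtosh} every warp is a twist, so the implication ``$\varphi$ is a warp $\Rightarrow$ $S_\varphi$ is a semiheap'' holds with no hypothesis whatsoever on $\mu_{ab}$. Thus all the new content lies in the converse, and that is where injectivity of $\mu_{ab}$ must enter. To keep the fixed pair of the hypothesis disjoint from the dummy variables of the semiheap axiom, I will write $\mu_{pq}$ (with $p,q$ the given elements for which $\mu_{pq}$ is injective) and reserve $x_1,\dots,x_5$ for the general quintuple.

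For the converse, assume $S_\varphi$ is a semiheap and write out its para-associativity on $x_1,\dots,x_5$, expanding $[-,-,-]_\varphi=[-,\varphi(-),-]$ in every slot. This yields the two-sided equality
\begin{equation*}
\underbrace{[[x_1,\varphi(x_2),x_3],\varphi(x_4),x_5]}_{(\mathrm{I})}=\underbrace{[x_1,\varphi([x_4,\varphi(x_3),x_2]),x_5]}_{(\mathrm{II})}=\underbrace{[x_1,\varphi(x_2),[x_3,\varphi(x_4),x_5]]}_{(\mathrm{III})}.
\end{equation*}
First I would observe that $(\mathrm{I})=(\mathrm{III})$ is automatic: it is exactly the outer para-associativity of the \emph{original} bracket $[-,-,-]$ applied to the arguments $x_1,\varphi(x_2),x_3,\varphi(x_4),x_5$. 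Hence the only genuine constraint imposed by $S_\varphi$ being a semiheap is $(\mathrm{I})=(\mathrm{II})$.

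Next I would apply the middle para-associativity of the original bracket, $[[u,v,w],s,t]=[u,[s,w,v],t]$, to rewrite $(\mathrm{I})=[x_1,[\varphi(x_4),x_3,\varphi(x_2)],x_5]$, so that $(\mathrm{I})=(\mathrm{II})$ reads
\begin{equation*}
[x_1,\,[\varphi(x_4),x_3,\varphi(x_2)]\,,x_5]=[x_1,\,\varphi([x_4,\varphi(x_3),x_2])\,,x_5]
\end{equation*}
for all $x_1,x_5$ and all $x_2,x_3,x_4$. The key and only nonformal step is the cancellation: specialising $x_1=p$ and $x_5=q$, both sides are $\mu_{pq}$ applied to a ternary bracket, so injectivity of $\mu_{pq}$ forces $[\varphi(x_4),x_3,\varphi(x_2)]=\varphi([x_4,\varphi(x_3),x_2])$ for all $x_2,x_3,x_4$. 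Renaming $(x_4,x_3,x_2)\to(a,b,c)$ this is precisely the warp identity $\varphi([a,\varphi(b),c])=[\varphi(a),b,\varphi(c)]$ of Definition \ref{def:warp}, so $\varphi$ is a warp.

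I expect the only real obstacle to be bookkeeping: tracking correctly where each $\varphi$ lands after the successive expansions, and recognising that of the two equalities packaged in the twisted para-associativity exactly one is a tautology inherited from the base semiheap while the other is the warp condition in disguise. Injectivity of $\mu_{ab}$ is used exactly once, to strip the fixed outer arguments $p,\cdot\,,q$ and pass from an equality of $\mu_{pq}$-images to an equality of the underlying brackets.
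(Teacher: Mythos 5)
Your proof is correct and follows essentially the same route as the paper: the forward direction is delegated to Lemma \ref{lem:sgtosh}, and the converse combines the twisted middle para-associativity with the original middle para-associativity to trap the warp identity inside $\mu_{ab}$, then cancels by injectivity. The only cosmetic difference is that you explicitly note the outer twisted axiom is a tautology, which the paper leaves implicit.
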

\begin{proof}
If $\varphi$ is a warp then by Lemma \ref{lem:sgtosh} $(S,[-,-,-]_\varphi)$ is a semiheap.
The opposite way, by the associativity of  semiheaps $(S,[-,-,-]_\varphi)$ and $(S,[-,-,-])$, we get that for all $c,d,e\in S$,
$$
\begin{aligned}
[a,\varphi([c,\varphi(d),e]),b]&=[a,[c,d,e]_\varphi,b]_{\varphi}=[[a,e,d]_\varphi,c,b]_\varphi=[[a,\varphi(e),d],\varphi(c),b]\\ &=[a,[\varphi(c),d,\varphi(e)],b],
\end{aligned}
$$
where the third equality follows by associativity of $[-,-,-]_\varphi$ and the fifth by the associativity of $[-,-,-]$. Now, since $\mu_{ab}$ is injective we get that
$$\varphi([c,\varphi(d),e])=[\varphi(c),d,\varphi(e)],$$
hence $\varphi$ is a warp.
\end{proof}

The following lemmata provides us with a class of warps in abelian semiheaps.

\begin{lemma}\label{lem:mu:absem}
Let $(S,[-,-,-])$ be an abelian semiheap. Then for any element $e\in S$, a map \mbox{$b\xmapsto{\mu_{ee}}[e,b,e]$} is a warp.
\end{lemma}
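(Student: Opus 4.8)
The goal is to verify the warp identity of Definition \ref{def:warp} for $\eta=\mu_{ee}$. Substituting $\mu_{ee}(x)=[e,x,e]$, the condition $\eta([a,\eta(b),c])=[\eta(a),b,\eta(c)]$ becomes
\[
[e,[a,[e,b,e],c],e]=[[e,a,e],b,[e,c,e]]
\]
for all $a,b,c\in S$. The plan is purely computational: I would reduce each side separately, using only the generalized associativity \eqref{semiheap}, until both land in a common shape, and then invoke the abelian hypothesis to identify them. The guiding idea is that \eqref{semiheap} alone will bring the two sides to within a transposition of $a$ and $b$, and that abelianness supplies exactly that transposition.

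First I would simplify the left-hand side. Collapsing the innermost bracket through \eqref{semiheap} gives $[a,[e,b,e],c]=[[a,e,b],e,c]$, so the left-hand side becomes $[e,[[a,e,b],e,c],e]$. Applying \eqref{semiheap} once more to this outer triple moves the two outer copies of $e$ into the first slot, producing $[[e,c,e],[a,e,b],e]$. Finally I would use the abelian property to reverse the entire triple, arriving at $[e,[a,e,b],[e,c,e]]$.

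The right-hand side needs only a single step: applying \eqref{semiheap} to $[[e,a,e],b,[e,c,e]]$ yields $[e,[b,e,a],[e,c,e]]$. Comparing the two reductions, the left side has collapsed to $[e,[a,e,b],[e,c,e]]$ and the right to $[e,[b,e,a],[e,c,e]]$, which differ only in the order of $a$ and $b$ inside the middle retract. The main — and essentially only — obstacle is exactly this discrepancy, and it is resolved by the abelian hypothesis, which gives $[a,e,b]=[b,e,a]$ and hence equates the two expressions. I anticipate no subtlety beyond carefully tracking which of the three clauses of \eqref{semiheap} is used at each step; the indispensable appeals to abelianness are the whole-triple reversal in the previous paragraph and the final transposition here, which is precisely why the statement is restricted to abelian semiheaps.
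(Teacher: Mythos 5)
Your proposal is correct and follows essentially the same computation as the paper: both reduce $[e,[a,[e,b,e],c],e]$ via generalized associativity to $[[e,c,e],[a,e,b],e]$ and then invoke abelianness. The only cosmetic difference is that the paper applies associativity once more to reach $[\mu_{ee}(c),b,\mu_{ee}(a)]$ and uses abelianness a single time at the end, whereas you meet in the middle and use abelianness twice; both are valid.
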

\begin{proof}
Simple observation that,
$$
\begin{aligned}
\mu_{ee}([a,\mu_{ee}(b),c])&=[e,[a,[e,b,e],c],e]=[e,[[a,e,b],e,c],e]=[[e,c,e],[a,e,b],e]\\ &=[[e,c,e],b,[e,a,e]]=[\mu_{ee}(c),b,\mu_{ee}(a)],
\end{aligned}
$$
where the third equality follows by the associativity of $S$. Since $S$ is abelian, we get that $$[\mu_{ee}(c),b,\mu_{ee}(a)]=[\mu_{ee}(a),b,\mu_{ee}(c)],$$ and $\mu_{ee}$ is a warp
\end{proof}

\begin{lemma}
Let $(S,[-,-,-])$ be a semiheap and $\mu_{ee}$ be surjective. Then $\mu_{ee}$ is a warp  if and only if $S$ is abelian semiheap.
\end{lemma}
\begin{proof}
Observe, that if $\mu_{ee}$ is a warp, then for all $h,g,b\in S$ exist $a,c\in S$ such that $h=\mu_{ee}(a)$ and $g=\mu_{ee}(c)$. The property of warp gives $$[[e,a,e],b,[e,c,e]=[[e,c,e],b,[e,a,e]],$$
and by putting $h$ and $g$, we get
$$
[h,b,g]=[g,b,h],
$$
so $S$ is abelian semiheap.

The opposite way follows by Lemma \ref{lem:mu:absem}.
\end{proof}

\begin{lemma}
Let $(S,[-,-,-])$ be an abelian semiheap. If $\varphi$ is a warp such that $\varphi$ is an involution, that is $\varphi^2=\mathrm{Id}$, and there exists $e\in S$ such that $\varphi(e)=e$, then $\mu_{ee}\circ \varphi$ is also a warp.
\end{lemma}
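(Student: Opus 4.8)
The plan is to exhibit $\mu_{ee}\circ\varphi$ as a composite of two \emph{commuting} warps, and then to observe that such a composite is automatically a warp. Both factors are warps to begin with: $\mu_{ee}$ is one by Lemma \ref{lem:mu:absem} (this is the only place the abelianness of $S$ is used), and $\varphi$ is one by hypothesis. Thus the whole argument reduces to two claims: (i) $\varphi$ and $\mu_{ee}$ commute as endofunctions of $S$, and (ii) the composite of two commuting warps is again a warp. I expect claim (i) to be the genuine obstacle, while (ii) is a formal two-step manipulation.

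For claim (i), I would evaluate the warp identity of $\varphi$ at $a=c=e$. Since $\varphi(e)=e$, this yields $\varphi([e,\varphi(x),e])=[\varphi(e),x,\varphi(e)]=[e,x,e]=\mu_{ee}(x)$ for all $x\in S$. Reading the left-hand side as $\varphi(\mu_{ee}(\varphi(x)))$ and post-composing with $\varphi$, the relation $\varphi^2=\mathrm{Id}$ collapses it to $\varphi(\mu_{ee}(x))=\mu_{ee}(\varphi(x))$, i.e. $\varphi\circ\mu_{ee}=\mu_{ee}\circ\varphi$. This is exactly where the two standing hypotheses $\varphi(e)=e$ and $\varphi^2=\mathrm{Id}$ get consumed, and it is the crux of the proof; everything else is bookkeeping.

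For claim (ii), let $\eta_1,\eta_2$ be warps with $\eta_1\eta_2=\eta_2\eta_1$ and put $\eta:=\eta_1\circ\eta_2$. To check $\eta([a,\eta(b),c])=[\eta(a),b,\eta(c)]$ I would peel the maps off one at a time. Using commutativity to rewrite the middle slot as $\eta(b)=\eta_2(\eta_1(b))$, the warp property of $\eta_2$ gives $\eta_2([a,\eta(b),c])=[\eta_2(a),\eta_1(b),\eta_2(c)]$; then applying $\eta_1$ and invoking its warp property with middle entry $\eta_1(b)$ gives $\eta_1([\eta_2(a),\eta_1(b),\eta_2(c)])=[\eta_1\eta_2(a),b,\eta_1\eta_2(c)]=[\eta(a),b,\eta(c)]$. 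Specializing to $\eta_1=\mu_{ee}$ and $\eta_2=\varphi$, whose commutativity is precisely claim (i), shows that $\mu_{ee}\circ\varphi$ satisfies the warp identity and hence is a warp, completing the argument.
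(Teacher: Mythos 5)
Your proof is correct, but it is organized quite differently from the paper's. The paper verifies the warp identity for $\mu_{ee}\circ\varphi$ by one monolithic six-step chain of equalities, consuming the hypotheses ($\varphi(e)=e$, involutivity, the warp property of $\varphi$, and associativity plus abelianness) in sequence inside a single computation. You instead factor the argument into three reusable pieces: $\mu_{ee}$ is a warp by Lemma \ref{lem:mu:absem}, the two maps commute, and commuting warps on the same semiheap compose to a warp. All three pieces check out: specializing the warp identity of $\varphi$ to $a=c=e$ gives $\varphi\circ\mu_{ee}\circ\varphi=\mu_{ee}$, which together with $\varphi^2=\mathrm{Id}$ yields $\varphi\circ\mu_{ee}=\mu_{ee}\circ\varphi$; and your two-step peeling argument for the composite $\eta_1\circ\eta_2$ is exactly where commutativity is needed (to rewrite the middle slot $\eta_1(\eta_2(b))$ as $\eta_2(\eta_1(b))$ before invoking the warp property of $\eta_2$). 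Your route buys modularity and transparency: it isolates abelianness as entering only through Lemma \ref{lem:mu:absem}, it makes explicit a commutation relation the paper never states, and it effectively supplies a proof of the claim in the paper's own remark that commuting warps compose to warps. The paper's direct computation is shorter on the page but hides these structural facts.
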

\begin{proof}
Let $a,b,c\in S$, then

$$
\begin{aligned}
\mu_{ee}\circ \varphi([a,\mu_{ee}\circ\varphi(b),c])&=[e,\varphi([a,[e,\varphi(b),e],c]),e]=[e,\varphi([a,[\varphi(e),\varphi(b),\varphi(e)],c]),e]\\ &=[e,\varphi([a,\varphi([e,\varphi^2(b),e]),c]),e]=[e,\varphi([a,\varphi([e,b,e]),c]),e]\\ &=[e,[\varphi(a),[e,b,e],\varphi(c)],e]=[\mu_{ee}\circ\varphi(a),b,\mu_{ee}\circ\varphi(c)],
\end{aligned}
$$
where the third equality uses the assumption that $\varphi(e)=e$, the fourth uses involutivity, the fifth follows by the fact that $\varphi$ is a warp and the sixth is associativity and abelianity.
Thus, $\mu_{ee}\circ \varphi$ is a warp.
\end{proof}

\begin{remark}
It is worth noting that in general, a composition of warps is not a warp. Let $S$ be a semiheap, $\varphi$ be a warp on $S$ and $\psi$ be a warp on $S_\varphi$. Then $\psi\circ\varphi$ is not necessarily a warp on $S$. One can easily show that if $\psi\circ \varphi=\varphi\circ\psi$, then $\psi\circ\varphi$ is a warp on $S$.
\end{remark}

\begin{lemma}\label{lem:invwrap}
Let $(S,[-,-,-])$ be a semiheap and $\eta$ be a bijection and a warp. Then $\eta^{-1}$ is also a warp.
\end{lemma}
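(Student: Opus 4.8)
The plan is to obtain the warp identity for $\eta^{-1}$ directly from the one for $\eta$ by exploiting bijectivity through two successive substitutions, with a single application of $\eta^{-1}$ in between. Recall that $\eta$ being a warp means
$$\eta([a,\eta(b),c])=[\eta(a),b,\eta(c)]\qquad\text{for all }a,b,c\in S,$$
and the goal is to establish the analogous statement
$$\eta^{-1}([a,\eta^{-1}(b),c])=[\eta^{-1}(a),b,\eta^{-1}(c)]\qquad\text{for all }a,b,c\in S.$$

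First I would substitute $a\mapsto\eta^{-1}(a)$ and $c\mapsto\eta^{-1}(c)$ into the warp identity for $\eta$, leaving $b$ untouched. Since $\eta\circ\eta^{-1}=\mathrm{Id}$, the right-hand side collapses and we obtain
$$\eta([\eta^{-1}(a),\eta(b),\eta^{-1}(c)])=[a,b,c].$$
Applying $\eta^{-1}$ to both sides then yields the auxiliary identity
$$[\eta^{-1}(a),\eta(b),\eta^{-1}(c)]=\eta^{-1}([a,b,c]).$$

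The second and final step is to replace $b$ by $\eta^{-1}(b)$ in this auxiliary identity. The middle entry $\eta(b)$ becomes $\eta(\eta^{-1}(b))=b$, while the right-hand side becomes $\eta^{-1}([a,\eta^{-1}(b),c])$, producing precisely the warp identity for $\eta^{-1}$. This completes the argument.

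Because each step is a bare substitution licensed by the bijectivity of $\eta$, followed by one application of $\eta^{-1}$, there is no serious obstacle here: no semiheap axiom or associativity is invoked beyond the warp identity for $\eta$ itself. The only point requiring care is carrying out the substitutions in the correct order, so that the compositions $\eta\circ\eta^{-1}$ and $\eta\circ\eta^{-1}$ cancel exactly where intended and the free variables end up in the right slots.
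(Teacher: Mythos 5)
Your proof is correct and is essentially the same argument as the paper's: both exploit bijectivity to substitute preimages/images into the warp identity for $\eta$ so that it collapses to the warp identity for $\eta^{-1}$ (the paper does it in one chain by writing $a=\eta(e)$, $b=\eta(d)$, $c=\eta(f)$; you split it into two substitutions, which is merely a cosmetic difference).
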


\begin{proof}
Let $a,b,c\in S$ and observe that since $\eta$ is a bijection there exist $e,d,f$ such that $\eta(e)=a$, $\eta(b)=d$ and $\eta(c)=f$.Then
$$\eta^{-1}([a,\eta^{-1}(b),c])=\eta^{-1}([\eta(e),\eta^{-1}(\eta(d)),\eta(f)])=\eta^{-1}(\eta([e,\eta(d),f]))=[\eta^{-1}(a),b,\eta^{-1}(c)].
$$
Thus $\eta^{-1}$ is a warp.
\end{proof}

Observe that since the inverse of the bijective warp is a warp, we get a criterion for the equivalence of semiheap structures on the same set.

\begin{definition}
We say that two semiheaps $S$ and $H$ are {\bf warp equivalent} if there exist a sequence of bijective warps $(\varphi_i)_{i\in\{0,1,2,\ldots,n\}}$, for $n\in \mathbb{N}$, such that $H=(((S_{\varphi_1})_{\varphi_2})...)_{\varphi_n}$, where $\varphi_i$ is a warp of $S_{\varphi_{i-1}}$ and $\varphi_0=\mathrm{Id}$.
\end{definition}

Indeed a relation $S\sim H$ if and only if $S$ is warp equivalent to $H$ is an equivalence relation on the set of all twists of $S$. Indeed $S\sim S$ as an identity is a warp. The relation is a symmetry by Lemma \ref{lem:invwrap}. Transitivity follows trivially from the definition of warp equivalency.

Given the close connection between semigroups and semiheaps, see Example \ref{exm:sg-to-sh} and Theorem \ref{monoidsemiheap}, we are compelled to consider a warp analogue for a semigroup.

\begin{definition}\label{def:switch}Let $(S,\cdot)$ be a semigroup. We say that a function $\varphi:S\to S$ is a  {\bf switch} if for all $a,b,c\in G$,
$$
\varphi(a\cdot \varphi(b)\cdot c)=\varphi(c)\cdot b\cdot\varphi(a).
$$
A semigroup equipped with a switch $(S,\cdot, \varphi)$ is called a \textbf{switch semigroup}.
\end{definition}

\begin{example}\label{exm:involutedsg}
Any involuted semigroup $(S,\cdot,*)$, in particular, any group taking $*=(-)^{-1}$, is a switch semigroup since
$$ (a\cdot b^*\cdot c)^*=c^* \cdot b \cdot a^*.$$
\end{example}

\noindent We thus see that switch semigroups generalize involuted semigroups by relaxing the algebraic condition on the map $\varphi$ from an antihomomorphic involution to the switch condition as defined above. The following lemma shows that the switch condition is, nevertheless, sufficient for a switch semigroup to induce a semiheap structure on the base set.

\begin{lemma}\label{lem:sg-to-sh}
Let $(S,\cdot)$ be a semigroup and $\varphi:S\to S$ be a switch. Then $(S,[-,-,-]^\varphi)$, where $[a,b,c]^\varphi=a\cdot \varphi(b)\cdot c$ for all $a,b,c\in S$, is a semiheap.
\end{lemma}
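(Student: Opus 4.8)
The plan is to verify the defining semiheap identity \eqref{semiheap} directly for the ternary operation $[a,b,c]^\varphi = a\cdot\varphi(b)\cdot c$, treating its two equalities separately since they have genuinely different flavours. I would first observe that the outer equality $[[a,b,c]^\varphi,d,e]^\varphi=[a,b,[c,d,e]^\varphi]^\varphi$ is purely a consequence of the associativity of the semigroup product and does not use the switch condition at all: expanding the left side gives $(a\cdot\varphi(b)\cdot c)\cdot\varphi(d)\cdot e$ and expanding the right side gives $a\cdot\varphi(b)\cdot(c\cdot\varphi(d)\cdot e)$, and these agree because both equal the unbracketed product $a\,\varphi(b)\,c\,\varphi(d)\,e$. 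So this half is immediate.

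The substantive step is the middle equality $[[a,b,c]^\varphi,d,e]^\varphi=[a,[d,c,b]^\varphi,e]^\varphi$, and this is exactly where the switch condition enters. I would expand the right-hand side as $a\cdot\varphi\bigl([d,c,b]^\varphi\bigr)\cdot e = a\cdot\varphi\bigl(d\cdot\varphi(c)\cdot b\bigr)\cdot e$, and then apply the switch identity from Definition \ref{def:switch}, namely $\varphi(x\cdot\varphi(y)\cdot z)=\varphi(z)\cdot y\cdot\varphi(x)$, with the substitution $x=d$, $y=c$, $z=b$. This yields $\varphi\bigl(d\cdot\varphi(c)\cdot b\bigr)=\varphi(b)\cdot c\cdot\varphi(d)$, so the right-hand side becomes $a\cdot\varphi(b)\cdot c\cdot\varphi(d)\cdot e$, which is precisely the common value computed above for the outer equality. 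Hence all three expressions in \eqref{semiheap} coincide.

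There is no real obstacle here: the entire content reduces to a bookkeeping computation, and the only non-formal ingredient is recognizing that the switch axiom is tailored precisely to reverse the order of the middle three factors $b,c,d$ in a way that reconciles the $[a,[d,c,b],e]$ pattern with the straightened product. The one point requiring mild care is matching the variables of the switch axiom to the argument $d\cdot\varphi(c)\cdot b$ in the correct order, since the axiom is stated with a $\varphi$ already inserted in its middle slot; getting this substitution right is what makes the $\varphi(c)$ (rather than $\varphi(\varphi(c))$) appear and the two middle $\varphi$'s land on $b$ and $d$. Once that is checked, the semiheap axioms hold for all $a,b,c,d,e\in S$ and the proof is complete.
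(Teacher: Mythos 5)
Your proof is correct and follows essentially the same route as the paper's: the outer equality is pure semigroup associativity, and the middle equality is the switch axiom applied to $\varphi(d\cdot\varphi(c)\cdot b)=\varphi(b)\cdot c\cdot\varphi(d)$. The variable substitution you flag as the one delicate point is exactly the step the paper's computation performs.
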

\begin{proof}
Let us check that associativity of semiheaps holds. Let $a,b,c,d,e\in S$,

$$
[[a,b,c]^\varphi,d,e]^\varphi=a\cdot\varphi(b)\cdot c\cdot\varphi(d)\cdot e=[a,b,[c,d,e]^\varphi]^\varphi
$$
and
$$\begin{aligned}
[[a,b,c]^\varphi,d,e]^\varphi &=a\cdot\varphi(b)\cdot c\cdot\varphi(d)\cdot e=a\cdot\varphi(d\cdot\varphi(c)\cdot b)\cdot e=a\cdot \varphi([d,c,b]^\varphi)\cdot e\\ &
=[a,[d,c,b]^\varphi,e]^\varphi.
\end{aligned}
$$
Thus $(S,[-,-,-]^\varphi)$ is a semiheap.
\end{proof}

Moreover, we can state the analogue of Lemma \ref{lem:injsh} for semigroups.

\begin{lemma}\label{lem:equiv-sg-sh}
Let $(S,\cdot)$ be a semigroup and $\varphi:S\to S$ be a function. If there exist $l,r\in S$ such that functions $s\xmapsto{l\cdot}l\cdot s$ and $s\xmapsto{\cdot r}s\cdot r$ are injective, then $(S,[-,-,-]^\varphi)$, where $[a,b,c]^\varphi=a\cdot \varphi(b)\cdot c$ for all $a,b,c\in S$, is a semiheap if and only if $\varphi$ is a switch.
\end{lemma}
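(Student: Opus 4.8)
The plan is to prove the two implications separately, noting that the injectivity hypotheses are needed for only one of them. The forward direction---that a switch $\varphi$ makes $(S,[-,-,-]^\varphi)$ into a semiheap---is already Lemma \ref{lem:sg-to-sh} and uses no assumption on $l,r$, so it suffices to establish the converse.

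For the converse, I would assume $(S,[-,-,-]^\varphi)$ is a semiheap and unwind the para-associativity condition \eqref{semiheap} for the bracket $[-,-,-]^\varphi$. Expanding both the outer and inner brackets via $[a,b,c]^\varphi=a\cdot\varphi(b)\cdot c$ and using associativity of $\cdot$, the first and third terms of \eqref{semiheap} both collapse to $a\cdot\varphi(b)\cdot c\cdot\varphi(d)\cdot e$, so they carry no information. The content lies entirely in equating the middle term, which gives, for all $a,b,c,d,e\in S$,
$$
a\cdot\varphi(d\cdot\varphi(c)\cdot b)\cdot e = a\cdot\varphi(b)\cdot c\cdot\varphi(d)\cdot e.
$$

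The crux is then to cancel the outer factors $a$ and $e$. Specializing $a=l$ and $e=r$, where $s\mapsto l\cdot s$ and $s\mapsto s\cdot r$ are injective, and regrouping by associativity, the equation reads $l\cdot(X\cdot r)=l\cdot(Y\cdot r)$ with $X=\varphi(d\cdot\varphi(c)\cdot b)$ and $Y=\varphi(b)\cdot c\cdot\varphi(d)$. Injectivity of $s\mapsto l\cdot s$ yields $X\cdot r=Y\cdot r$, and then injectivity of $s\mapsto s\cdot r$ yields $X=Y$, that is,
$$
\varphi(d\cdot\varphi(c)\cdot b)=\varphi(b)\cdot c\cdot\varphi(d).
$$
Relabelling $(d,c,b)\mapsto(a,b,c)$ turns this into $\varphi(a\cdot\varphi(b)\cdot c)=\varphi(c)\cdot b\cdot\varphi(a)$, which is exactly the switch condition of Definition \ref{def:switch}; hence $\varphi$ is a switch.

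The only genuine work is identifying which instance of para-associativity carries content and then performing the two-sided cancellation; the one-sided injective elements $l$ and $r$ are precisely what legitimize this cancellation, playing here the role that injectivity of $\mu_{ab}$ plays in Lemma \ref{lem:injsh}. I expect no further obstacle.
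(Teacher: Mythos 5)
Your proposal is correct and follows essentially the same route as the paper: the forward direction is delegated to Lemma \ref{lem:sg-to-sh}, and the converse extracts the switch condition from the middle term of para-associativity applied with $l$ and $r$ in the outer slots, followed by two-sided cancellation. The only cosmetic difference is that you first state the identity for general $a,e$ and then specialize, whereas the paper plugs in $l$ and $r$ from the start.
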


\begin{proof}
 If $\varphi$ is a switch, then $(S,[-,-,-]^\varphi)$ is a semiheap by Lemma \ref{lem:sg-to-sh}.
 
 In the opposite direction, let us assume $(S,[-,-,-]^\varphi)$ is a semiheap. Then by associativity of semiheap we get that for all $a,b,c\in S$,
 
$$
l\cdot \varphi(c\cdot\varphi(b)\cdot a)\cdot r=[l,[c,b,a]^\varphi,r]^\varphi=[[l,a,b]^\varphi,c,r]^\varphi=l\cdot\varphi(a)\cdot b\cdot \varphi(c)\cdot r.
$$
Now, since both maps $l\cdot,\cdot r:S\to S$ are injective, we get that 
$$
\varphi(c\cdot\varphi(b)\cdot a)=\varphi(a)\cdot b\cdot\varphi(c).
$$
Thus $\varphi$ is a switch.
\end{proof}

\begin{example}\label{exm:strong}
Observe that assumptions of the Lemma \ref{lem:equiv-sg-sh} are quite strong. If we consider semigroup $S$ with a constant binary operation $(a,b)\xmapsto{\cdot}c$, for a fixed c, then any function $f:S\to S$ is a switch. Even though multiplications by any elements are not injective, we still get a semiheap $(S,[-,-,-]^f).$
\end{example}

\begin{example}\label{exm:ab:raw}
If the semigroup $S$ is abelian, then an identity is a switch. That is $(S,[-,-,-]^{\mathrm{Id}})$ is a semiheap.
\end{example}

\begin{lemma}\label{lem:imp:switch}
Let $(S,[-,-,-])$ be a semiheap and $e,a\in S$ such that $[e,a,x]=[a,e,x]=[x,e,a]=[x,a,e]$, for all $x\in S$. Then $\mu_{ee}$ is a switch for the semigroup $(S,\MiddleB{a})$.
\end{lemma}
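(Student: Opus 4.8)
The plan is to verify the switch identity of Definition \ref{def:switch} directly for the retract semigroup, using the semiheap axioms \eqref{semiheap} as rewriting rules together with the four-fold hypothesis. First I would note that $(S,\MiddleB{a})$ really is a semigroup: the last equality in \eqref{semiheap} gives $(x\MiddleB{a}y)\MiddleB{a}z=[[x,a,y],a,z]=[x,a,[y,a,z]]=x\MiddleB{a}(y\MiddleB{a}z)$, so $\MiddleB{a}$ is associative and the statement is meaningful. Next I would compress the hypothesis into a single endofunction $\theta$ defined by $\theta(x):=[e,a,x]=[a,e,x]=[x,e,a]=[x,a,e]$; the whole content of the assumption is that these four shifts coincide, so $\theta$ may be substituted for any of these patterns wherever it occurs. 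Writing the switch variables as $x,y,z$ to avoid a clash with the retract element $a$, and putting $\varphi:=\mu_{ee}$, the goal is $\mu_{ee}\big(x\MiddleB{a}\mu_{ee}(y)\MiddleB{a}z\big)=\mu_{ee}(z)\MiddleB{a}y\MiddleB{a}\mu_{ee}(x)$ for all $x,y,z\in S$.

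The heart of the argument is to reduce each side, independently, to one common normal form. On the left I would first collapse the inner product: writing $\mu_{ee}(y)=[e,y,e]$ and applying \eqref{semiheap} twice, the crucial point is that the blocks $[x,a,e]$ and $[e,a,z]$ occur with $e,a$ adjacent, so the hypothesis turns them into $\theta(x)$ and $\theta(z)$ and yields $x\MiddleB{a}\mu_{ee}(y)\MiddleB{a}z=[\theta(x),y,\theta(z)]$. One more use of the middle rule of \eqref{semiheap}, feeding the adjacency $[y,e,a]=\theta(y)$, rewrites this as $[x,\theta(y),\theta(z)]$; applying the outer $\mu_{ee}$ and a final middle move then gives $[[e,\theta(z),\theta(y)],x,e]$. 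On the right I would symmetrically simplify $\mu_{ee}(z)\MiddleB{a}y=[[e,z,e],a,y]=[e,\theta(z),y]$, then multiply by $\mu_{ee}(x)$ and peel off $[y,a,e]=\theta(y)$ via \eqref{semiheap} to reach $[e,\theta(z),[\theta(y),x,e]]$, which by the last identity in \eqref{semiheap} equals the same expression $[[e,\theta(z),\theta(y)],x,e]$. Comparing the two normal forms closes the proof.

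The manipulation itself is routine once it is set up correctly, so the one real obstacle is bookkeeping: the hypothesis only rewrites brackets in which $e$ and $a$ sit in the first two or the last two slots, and a careless associativity move produces forbidden patterns such as $[e,x,a]$ (with $e$ and $a$ separated) that $\theta$ cannot touch, leading nowhere. The organising principle that makes everything go through is therefore to choose, at each step, the one of the three moves in \eqref{semiheap} that keeps $e$ and $a$ adjacent, so that a $\theta$-substitution is always available; maintaining this discipline is exactly what turns a potentially messy five-letter computation into the two short chains above.
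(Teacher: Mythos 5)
Your proof is correct and takes essentially the same route as the paper's: a direct verification of the switch identity for $(S,\MiddleB{a})$ by repeatedly applying the para-associativity law \eqref{semiheap} so that $e$ and $a$ become adjacent and the hypothesis can be substituted. The only difference is organizational (you reduce both sides to the common normal form $[[e,\theta(z),\theta(y)],x,e]$, whereas the paper runs a single chain of equalities from left to right), and every step of your reduction checks out.
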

\begin{proof}
Let $x,y,z\in S$, then
$$
\begin{aligned}
\mu_{ee}(x\MiddleB{a}\mu_{ee}(y)\MiddleB{a}z)&=[e,[[x,a,[e,y,e]],a,z],e]=[[e,z,a],[x,a,[e,y,e]],e]\\ &=[[e,z,a],[e,y,e],[a,x,e]] =[[e,z,[a,e,y]],e,[a,x,e]]\\ &=[[e,z,[e,a,y]],e,[a,x,e]]=[[e,z,e],[e,y,a],[a,x,e]]\\ &=[[e,z,e],[a,y,a],[e,x,e]]=\mu_{ee}(z)\MiddleB{a}y\MiddleB{a}\mu_{ee}(x),
\end{aligned}
$$
where the second, third and fourth equalities follow by the associativity of a semiheap, the fifth uses the lemma assumption, and the seventh follows in a similar way using associativity and the assumption.
Thus, $\mu_{ee}$ is a switch for the semigroup $(S,\MiddleB{a})$.
\end{proof}

In an abelian case switches and warps coincide, for a particular choice of ternary operation.

\begin{lemma}\label{lem:cor-switch-warp}
Let $(S,\cdot)$ be an abelian semigroup.Then $\varphi:S\to S$ is a switch if and only if $\varphi$ is a warp of $(S,[-,-,-]^{\mathrm{Id}})$.
\end{lemma}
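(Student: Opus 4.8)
The plan is to unwind both conditions using the explicit formula $[a,b,c]^{\mathrm{Id}}=a\cdot b\cdot c$ for the untwisted semiheap attached to the semigroup and the commutativity of $\cdot$, and to show that the switch identity and the warp identity become literally the same equation. First I would record that since $(S,\cdot)$ is abelian, Example \ref{exm:ab:raw} guarantees that the identity map is a switch, so $(S,[-,-,-]^{\mathrm{Id}})$ really is a semiheap and the warp condition is being tested against a legitimate semiheap structure.

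Next I would compute the switch condition for $\varphi$. By Definition \ref{def:switch}, $\varphi$ is a switch means
$$
\varphi(a\cdot\varphi(b)\cdot c)=\varphi(c)\cdot b\cdot\varphi(a)
$$
for all $a,b,c$. Using commutativity of $\cdot$ the right-hand side equals $\varphi(a)\cdot b\cdot\varphi(c)$, so the switch condition is equivalent to
$$
\varphi(a\cdot\varphi(b)\cdot c)=\varphi(a)\cdot b\cdot\varphi(c).
$$
In parallel I would compute the warp condition from Definition \ref{def:warp} for the ternary operation $[x,y,z]^{\mathrm{Id}}=x\cdot y\cdot z$: the requirement $\varphi([a,\varphi(b),c]^{\mathrm{Id}})=[\varphi(a),b,\varphi(c)]^{\mathrm{Id}}$ expands to exactly
$$
\varphi(a\cdot\varphi(b)\cdot c)=\varphi(a)\cdot b\cdot\varphi(c).
$$
Since the two displayed equations coincide, $\varphi$ is a switch for $(S,\cdot)$ if and only if $\varphi$ is a warp for $(S,[-,-,-]^{\mathrm{Id}})$, and both directions follow at once.

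There is essentially no hard step here: the whole content is that abelianity collapses the orientation-reversing nature of the switch condition (the $\varphi(c)\cdots\varphi(a)$ ordering) into the orientation-preserving warp condition. The only thing to be careful about is the direction in which commutativity is applied, namely turning $\varphi(c)\cdot b\cdot\varphi(a)$ into $\varphi(a)\cdot b\cdot\varphi(c)$; once that rewriting is made the two conditions are syntactically identical. I would therefore present the proof as a single chain of equivalences rather than splitting it into two separate implications.
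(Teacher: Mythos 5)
Your proof is correct and takes essentially the same approach as the paper: both unwind the switch and warp conditions over $[a,b,c]^{\mathrm{Id}}=a\cdot b\cdot c$ and use commutativity to identify $\varphi(c)\cdot b\cdot\varphi(a)$ with $\varphi(a)\cdot b\cdot\varphi(c)$. The paper merely writes the two implications separately rather than as a single chain of equivalences.
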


\begin{proof}
If $\varphi$ is a switch and $[a,b,c]^{\mathrm{Id}}=a\cdot b\cdot c$, then
$$
\varphi([c,\varphi(b),a])=\varphi(c\cdot \varphi(b)\cdot a)=\varphi(c)\cdot b\cdot\varphi(a)=[\varphi(c),b,\varphi(a)],
$$
and $\varphi$ is a warp.

If $\varphi$ is a warp for $(S,[-,-,-]^{\mathrm{Id}})$, then
$$
\varphi(a\cdot \varphi(b)\cdot c)=\varphi([a,\varphi(b),c])=\varphi([c,\varphi(b),a])=[\varphi(c),b,\varphi(a)]=\varphi(c)\cdot b\cdot\varphi(a),
$$
where the second equality follows from the fact that $S$ is an abelian semigroup. Thus $\varphi$ is a switch
\end{proof}

\begin{lemma}
Let $(S,[-,-,-])$ be an abelian semiheap and $\varphi$ be a warp such that $\varphi$ is an involution, that is $\varphi^2=\mathrm{Id}$, and there exists $e\in S$ such that $\varphi(e)=e$. Then $\varphi$ is a switch for the semigroup $(S,\MiddleB{e})$.
\end{lemma}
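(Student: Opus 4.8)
The plan is to verify directly the switch identity $\varphi(x \MiddleB{e} \varphi(y) \MiddleB{e} z)=\varphi(z)\MiddleB{e} y \MiddleB{e}\varphi(x)$ for all $x,y,z\in S$. First I recall that $(S,\MiddleB{e})$ is indeed a semigroup: the outer equality of the semiheap axiom \eqref{semiheap} gives $[[x,e,y],e,z]=[x,e,[y,e,z]]$, so associativity of $\MiddleB{e}$ is automatic. This lets me freely write iterated products such as $x\MiddleB{e}\varphi(y)\MiddleB{e} z$ without bracketing.

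The key preliminary step I would isolate is that $\varphi$ commutes with the middle shift $\mu_{ee}$, that is $[e,\varphi(y),e]=\varphi([e,y,e])$. This comes from reading the warp property backwards: $[\varphi(e),\varphi(y),\varphi(e)]=\varphi([e,\varphi^2(y),e])$, and then using the hypotheses $\varphi(e)=e$ and $\varphi^2=\mathrm{Id}$ to reduce both sides to the claimed equality. This is the same manipulation that appears in the preceding lemma, so it requires no new idea.

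With this identity available, I would expand the left-hand side. Applying the middle form of \eqref{semiheap} gives $x\MiddleB{e}\varphi(y)\MiddleB{e} z=[x,[e,\varphi(y),e],z]$; substituting $[e,\varphi(y),e]=\varphi([e,y,e])$ and applying the warp property once more yields $\varphi([x,\varphi([e,y,e]),z])=[\varphi(x),[e,y,e],\varphi(z)]$. Expanding the right-hand side in the same way, again via the middle form of \eqref{semiheap}, gives $\varphi(z)\MiddleB{e} y\MiddleB{e}\varphi(x)=[\varphi(z),[e,y,e],\varphi(x)]$.

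Finally I would invoke abelianity, $[a,b,c]=[c,b,a]$, to identify $[\varphi(x),[e,y,e],\varphi(z)]$ with $[\varphi(z),[e,y,e],\varphi(x)]$, which closes the argument. The only genuine subtlety is the commutation $[e,\varphi(y),e]=\varphi([e,y,e])$, where the fixed-point condition and involutivity are both essential; once that is in place, the remainder is routine bookkeeping with the associativity and abelianity axioms.
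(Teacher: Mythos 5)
Your proof is correct and follows essentially the same route as the paper: both hinge on using $\varphi(e)=e$ and involutivity to turn $[e,\varphi(y),e]$ into $\varphi([e,y,e])$ via the warp identity, then applying the warp identity once more to the outer bracket and finishing with para-associativity and abelianity. The only difference is cosmetic—you isolate the commutation $\mu_{ee}\circ\varphi=\varphi\circ\mu_{ee}$ as a preliminary step, whereas the paper performs the same manipulation inline.
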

\begin{proof}
Let $a,b,c\in S$, then
$$
\begin{aligned}
\varphi(a\MiddleB{e}\varphi(b)\MiddleB{e}c)&=[[a,e,\varphi(b)],e,c]=\varphi([a,[\varphi(e),\varphi(b),\varphi(e)],c])=\varphi([a,\varphi([e,\varphi^2(b),e]),c])\\ &=[\varphi(a),[e,\varphi^2(b),e],\varphi(c)]=[[\varphi(c),e,b],e,\varphi(a)]=\varphi(c)\MiddleB{e} b\MiddleB{e}\varphi(a),
\end{aligned}
$$
where the second equality follows by the fact that $\varphi(e)=e$ and associativity of a semiheap, third and fourth uses warp property, and fifth follows by the involutivity of $\varphi$. Thus, $\varphi$ is a switch.
\end{proof}

\begin{proposition}\label{prop:sg:list}
Let $(S,\cdot,1)$ be a monoid, $\varphi:S\to S$ a switch and $u\in S$ be such that $\varphi(u)=1$. Then the following statements hold: 
\begin{enumerate}
\item\label{lem:-sg:en:1} A switch $\varphi$ is an involutive antihomomorphism of monoids if and only if $u=1$.
\item \label{lem:-sg:en:2} The element $u\in S$ is invertible with the inverse $\varphi(1)$.
\item\label{lem:-sg:en:3} For all $a,b\in S$, $\varphi(a\cdot b)=\varphi(b)\cdot u\cdot \varphi(a)$ and $\varphi(a)\cdot \varphi(b)=\varphi(b\cdot u^{-1}\cdot a)$.
\item\label{lem:-sg:en:4} For all $a \in S$,  $$\varphi(a)\cdot u^{-1}=\varphi(u^{-1}\cdot a), \qquad u^{-1}\cdot \varphi(a)=\varphi(a\cdot u^{-1}),$$
$$
\varphi(u\cdot a)=\varphi(a)\cdot u, \qquad \varphi(a\cdot u)=u\cdot\varphi(a).$$
\item\label{lem:-sg:en:5} For all $n\in\mathbb{Z}$, $\varphi(u^n)=u^{n-1}.$
\item \label{lem:-sg:en:6} $\varphi$ is a bijection.
\end{enumerate}
\end{proposition}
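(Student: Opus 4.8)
The plan is to extract everything from a small number of specializations of the defining switch identity $\varphi(a\cdot\varphi(b)\cdot c)=\varphi(c)\cdot b\cdot\varphi(a)$ and then feed these back into one another. The three ``master identities'' I would establish first are: (I) setting $b:=u$ and using $\varphi(u)=1$ gives $\varphi(a\cdot c)=\varphi(c)\cdot u\cdot\varphi(a)$; (II) setting $b:=1$ gives $\varphi(a\cdot\varphi(1)\cdot c)=\varphi(c)\cdot\varphi(a)$; and (III) setting $a:=c:=1$ gives $\varphi^2(b)=\varphi(1)\cdot b\cdot\varphi(1)$. Identity (I) is already the first equation of item \eqref{lem:-sg:en:3}.

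From (I) I would specialize twice more, again using $\varphi(u)=1$: taking $a:=u$ yields $\varphi(u\cdot c)=\varphi(c)\cdot u$, and taking $c:=u$ yields $\varphi(a\cdot u)=u\cdot\varphi(a)$, which are the last two relations of item \eqref{lem:-sg:en:4}. Evaluating the first at $c:=1$ and the second at $a:=1$ gives $1=\varphi(1)\cdot u$ and $1=u\cdot\varphi(1)$, so $u$ is invertible with $u^{-1}=\varphi(1)$, proving item \eqref{lem:-sg:en:2}. I would then substitute $\varphi(1)=u^{-1}$ throughout. In particular (II) becomes $\varphi(a)\cdot\varphi(b)=\varphi(b\cdot u^{-1}\cdot a)$, completing item \eqref{lem:-sg:en:3}, while (III) becomes $\varphi^2(b)=u^{-1}\cdot b\cdot u^{-1}$, whence $\varphi(u^{-1})=\varphi^2(1)=u^{-2}$. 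The two remaining relations of item \eqref{lem:-sg:en:4} then drop out of (I): $\varphi(u^{-1}\cdot a)=\varphi(a)\cdot u\cdot\varphi(u^{-1})=\varphi(a)\cdot u^{-1}$, and symmetrically $\varphi(a\cdot u^{-1})=\varphi(u^{-1})\cdot u\cdot\varphi(a)=u^{-1}\cdot\varphi(a)$.

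For item \eqref{lem:-sg:en:5} I would first prove $\varphi(u^n\cdot a)=\varphi(a)\cdot u^n$ for all $n\in\mathbb{Z}$ by induction in both directions, using $\varphi(u\cdot c)=\varphi(c)\cdot u$ for the step up and $\varphi(u^{-1}\cdot a)=\varphi(a)\cdot u^{-1}$ for the step down; setting $a:=1$ then gives $\varphi(u^n)=\varphi(1)\cdot u^n=u^{n-1}$. Item \eqref{lem:-sg:en:6} follows from (III): the map $b\mapsto u^{-1}\cdot b\cdot u^{-1}$ is a bijection (its inverse being $b\mapsto u\cdot b\cdot u$), and since this bijection equals $\varphi\circ\varphi$, the map $\varphi$ is forced to be both injective and surjective.

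Finally I would prove item \eqref{lem:-sg:en:1}, which is logically last since it rests on items \eqref{lem:-sg:en:2} and \eqref{lem:-sg:en:3}. If $u=1$ then (I) reads $\varphi(a\cdot b)=\varphi(b)\cdot\varphi(a)$ and (III) reads $\varphi^2=\mathrm{Id}$, so $\varphi$ is an involutive antihomomorphism. Conversely, any antihomomorphism of monoids preserves the identity, so $\varphi(1)=1$; combined with $\varphi(1)=u^{-1}$ from item \eqref{lem:-sg:en:2}, this forces $u=1$. The only real difficulty I anticipate is organizational rather than computational: the six items are interdependent and cannot be proved in the stated order, so the main task is to isolate the right three specializations at the outset and then verify that the exponent bookkeeping (particularly $\varphi(u^{-1})=u^{-2}$ and the two-sided induction of item \eqref{lem:-sg:en:5}) stays consistent. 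I would also take care, in the converse of item \eqref{lem:-sg:en:1}, to justify that ``antihomomorphism of monoids'' includes preservation of the unit, which in any case follows from involutivity together with the surjectivity established in item \eqref{lem:-sg:en:6}.
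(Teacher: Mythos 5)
Your proposal is correct and follows essentially the same route as the paper: both extract everything from specializations of the switch identity (at $b=u$, $b=1$, and $a=c=1$), obtain $u^{-1}=\varphi(1)$, and deduce bijectivity from the fact that $\varphi^2(b)=\varphi(1)\cdot b\cdot\varphi(1)$ is a bijection. The only differences are organizational (your explicit ``master identities'' and the two-sided induction for item (5)), not mathematical.
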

\begin{proof}
\ref{lem:-sg:en:1}. Assume $\varphi(1)=1$, then for all $a,b\in S$,
$$\varphi(a\cdot b)=\varphi(a\cdot \varphi(1)\cdot b)=\varphi(a)\cdot 1\cdot \varphi(b)=\varphi(a)\cdot\varphi(b)$$
and 
$$
\varphi(\varphi(b))=\varphi(1\cdot\varphi(b)\cdot 1)=\varphi(1)\cdot b\cdot \varphi(1)=b.
$$
Thus, $\varphi$ is an involutive antihomomorphism. Opposite follows by the fact that antihomomorphism of monoids preserve an identity.

\ref{lem:-sg:en:2}. Observe that $\varphi(1)=u^{-1}$, since
$$
u\cdot \varphi(1)=\varphi(u)\cdot u\cdot \varphi(1)=\varphi(u\cdot \varphi(u)\cdot 1)=\varphi(u)=1.
$$
Analogously from the right side, so $\varphi(1)=u^{-1}$ and $u$ is invertible.

\ref{lem:-sg:en:3}. Let $a,b\in S$, then
$$
\varphi(a\cdot b)=\varphi(a\cdot 1\cdot b)=\varphi(a\cdot \varphi(u)\cdot b)=\varphi(b)\cdot u\cdot \varphi(a),
$$
$$
\varphi(a)\cdot \varphi(b)=\varphi(a)\cdot 1\cdot \varphi(b)=\varphi(b\cdot \varphi(1)\cdot a)=\varphi(b\cdot u^{-1}\cdot a).
$$

\ref{lem:-sg:en:4}. Observe that by taking $b=1$ in \ref{lem:-sg:en:3}, we get that
$$
\varphi(a)\cdot u^{-1}=\varphi(a)\cdot \varphi(1)=\varphi(u^{-1}\cdot a)\ \&\ u^{-1}\cdot \varphi(a)=\varphi(a)\cdot \varphi(1)=\varphi(a\cdot u^{-1}),
$$
Similarly, by taking $b=u$, we get
$$
\varphi(a\cdot u)=\varphi(u)\cdot u\cdot\varphi(a)=u\cdot \varphi(a)\ \&\  \varphi(u\cdot a)=\varphi(a)\cdot u\cdot\varphi(u)=\varphi(a)\cdot u,
$$
Thus \ref{lem:-sg:en:4} holds.

\ref{lem:-sg:en:5}. Simply, by taking $a=u^n$ for some $n\in \mathbb{Z}$ in \ref{lem:-sg:en:4}, we get that 
$$
\varphi(u^n)=\varphi(\underbrace{u^{sgn(n)}\cdot\ldots\cdot u^{sgn(n)}}_{|n|-times}\cdot 1)=\varphi(1)\cdot u^{sgn(n)|n|}=u^{-1}\cdot u^{n}=u^{n-1},
$$
where $sgn(n)$ is a sign of a number $n$ and $|n|$ is absolute value of $n$, that is $sgn(n)|n|=n$.

\ref{lem:-sg:en:6}. Let us consider a function $a\xmapsto{\iota_u} u\cdot x\cdot u$. Then the conposition $\iota_u\circ \varphi$ is the inverse function of $\varphi$. Indeed, for all $a\in S$,
$$
\iota_u\circ \varphi\circ \varphi(a)=\iota_u(\varphi(1\circ \varphi(a)\circ 1))=\iota_u(\varphi(1)\cdot a\cdot \varphi(1))=
u\cdot \varphi(1)\cdot a\cdot \varphi(1)\cdot u=u\cdot u^{-1}\cdot a\cdot u^{-1}\cdot u=a,
$$
where second equality follows by the fact that $\varphi$ is a switch and fourth follows by \eqref{lem:-sg:en:2}.
\end{proof}

\begin{example}
Let $(S,\circ,1)$ be an abelian monoid and $x\in S$ be an invertible element. Then $a\xmapsto{\varphi}x^{-1}\circ a$ gives a semiheap structure on $S$ with bracket given for all $a,b,c\in S$ by $[a,b,c]^{x^{-1}\circ}=a\circ x^{-1}\circ b\circ c$.
\end{example}
\begin{example}
Let us consider natural numbers $\mathbb{N}_0:=\mathbb{N}\setminus\{0\}$. Then for any $k\in \mathbb{N}_0$ the map $f_k(m):=k+m$, for all $m\in \mathbb{N}_0$, is a switch.
\end{example}

\section{Biunit pairs}\label{sec:main}
The main goal of this section, and paper, is to state and prove the generalized version of \cite[Theorem 2.8]{wagner1953theory} for biunit pairs, see Theorem \ref{thm:main}. Let us start with the definiton of biunit pairs.

\begin{definition}\label{def:bp}[Biunit Pair cf. \cite{wagner1953theory,wagner1965trans,hollings2017wagner}] Let $(S,[-,-,-])$ be a semiheap and $a,b\in S$. We say that the pair $(a,b)$ is a \textbf{left (right) biunit pair} if for all $x\in S$
\begin{equation}\label{biunit}
    [a,b,x]=x \qquad([x,a,b]=x).
\end{equation}
If a pair is left and right biunit, we call it a \textbf{biunit pair}. 
\end{definition}

Note that for an element $a\in S$ such that $(a,a)$ is a biunit pair we recover the definition of Mal'cev or biunit element.

\begin{example}\label{exm:biunitpairsodd}
Let us consider a set of odd integers  $\mathrm{Odd}:=\{2n+1\ |\ n\in \mathbb{Z}\}$ together with a bracket $[-,-,-]:\mathrm{Odd}^3\to \mathrm{Odd}$ given for all $a,b,c\in \mathrm{Odd}$ by $[a,b,c]=a+b+c$. Obviously odd numbers are closed on the bracket operation. Moreover, an element is a biunit if and only if $a+a+b=b$, which implies $a=0$,however $0\not\in \mathrm{Odd}$. Now, observe that a pair $(a,-a)$ is a biunit pair for all $a\in \mathrm{Odd}$.
\end{example}

\begin{example}\label{exm:biunitpairsfish}
In the algebra of cubic matrices \cite{zapata2022heaps}, the ternary operation on $3$-index arrays of scalars (elements of a semiring in general) of size $N\times N \times N$ defined by
\begin{equation*}
    [a,b,c]_{ijk}:=\sum_{p,q,r=1}^N a_{ijp}\cdot b_{qrp} \cdot c_{qrk}
\end{equation*}
is easily checked to be a semiheap operation. The pair of special cubic matrices generalizing the identity matrix defined by
\begin{equation*}
I_{ijk}=\delta_{ijk} \qquad \iota_{ijk}=\delta_{ik},    
\end{equation*}
form a (right) biunit pair since we can directly compute:
\begin{equation*}
    [a,I,\iota]_{ijk}=a_{ijk}=[a,\iota,I]_{ijk}.
\end{equation*}
\end{example}

\begin{lemma}\label{lem:biunit:prop}
Let $(S,[-,-,-])$ be a semiheap. Then
\begin{enumerate}
    \item\label{bi:eq1} If $(a,b)$ is a left (right) biunit pair then $(b,a)$ is a left (right) biunit pair. 
    \item\label{bi:eq2}  For any $a\in S$, if there exists $b\in S$ such that $(a,b)$ is a biunit pair, then $b$ is unique.
     \item\label{bi:eq3}  If $(a,b)$ is a left (right) biunit pair. Then if $b$ is a right (left) biunit, then $a$ is a left (right)  biunit.
 
\end{enumerate}
\end{lemma}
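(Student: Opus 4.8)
The plan is to prove (1) first by a single bracket manipulation and then to bootstrap (2) and (3) from it. For the left case of (1), assume $(a,b)$ is a left biunit pair, so $[a,b,x]=x$ for all $x$; in particular $[a,b,b]=b$. The idea is to substitute this value of $b$ into the first slot of $[b,a,x]$ and then use the first equality in \eqref{semiheap} to interchange the first two arguments:
\[
[b,a,x]=[[a,b,b],a,x]=[a,[a,b,b],x]=[a,b,x]=x,
\]
where the first and third equalities use $[a,b,b]=b$ and the second is the para-associativity law $[[a,b,c],d,e]=[a,[d,c,b],e]$. Hence $(b,a)$ is a left biunit pair. The right case is completely symmetric: from $[x,a,b]=x$ one gets $[a,a,b]=a$, and using the second equality in \eqref{semiheap} one computes $[x,b,a]=[x,b,[a,a,b]]=[x,[a,a,b],b]=[x,a,b]=x$.

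For (2), suppose $(a,b)$ and $(a,b')$ are both biunit pairs, and evaluate the single bracket $[b,a,b']$ in two ways. By part (1) the pair $(b,a)$ is a left biunit pair, so $[b,a,b']=b'$; on the other hand $(a,b')$ is a right biunit pair, so $[b,a,b']=b$. Comparing the two evaluations gives $b=b'$, establishing uniqueness of the right partner of $a$.

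For (3), consider the left version: $(a,b)$ is a left biunit pair and $b$ is a right biunit element, i.e. $[x,b,b]=x$ for all $x$. Evaluating $[a,b,b]$ two ways — it equals $b$ by the left biunit pair and $a$ by the right biunit element — forces $a=b$, whence $[a,a,x]=[a,b,x]=x$ and $a$ is a left biunit element. The right version is the mirror image, but here no single bracket matches both hypotheses directly, so one first invokes part (1) to replace $(a,b)$ by the right biunit pair $(b,a)$ and then evaluates $[b,b,a]$ two ways (equal to $b$ via the pair $(b,a)$ and to $a$ via the left biunit element $b$) to again conclude $a=b$, hence that $a$ is a right biunit element.

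The only genuinely non-obvious step is the argument-swapping trick in (1): recognizing that rewriting $b$ as $[a,b,b]$ and applying para-associativity collapses $[b,a,x]$ back to $[a,b,x]$. Once (1) is available, parts (2) and (3) reduce to evaluating one well-chosen bracket in two different ways, with the small extra point in the right version of (3) that part (1) must be applied before the two evaluations can share a common bracket.
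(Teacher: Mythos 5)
Your proof is correct. Part (1) is verbatim the paper's own computation, $[b,a,x]=[[a,b,b],a,x]=[a,[a,b,b],x]=[a,b,x]=x$, with the right case (which the paper dismisses as ``similar'') written out explicitly and correctly. For parts (2) and (3) you take a slightly different but equally elementary route: where the paper proves uniqueness via the chain $b'=[[a,b,b'],a,b]=[a,[a,b',b],b]=[a,b,b]=b$ and proves (3) via $x=[a,b,[b,a,x]]=[a,[a,b,b],x]=[a,a,x]$, you instead evaluate a single well-chosen bracket in two ways ($[b,a,b']$ for (2), $[a,b,b]$ resp.\ $[b,b,a]$ for (3)). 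This buys you a shorter argument and, in (3), the explicit intermediate conclusion $a=b$, which the paper's proof leaves implicit; the cost is that (2) and (3) now depend on part (1), whereas the paper's computation for (2) invokes para-associativity directly. Your remark that the right-hand version of (3) genuinely needs part (1) before the two evaluations share a common bracket is accurate. No gaps.
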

\begin{proof}
\ref{bi:eq1}. Observe that if $(a,b)\in S\times S$ is a left biunit pair then $[a,b,b]=b$, and for all $x\in S$,
$$
[b,a,x]=[[a,b,b],a,x]=[a,[a,b,b],x]=[a,b,x]=x,
$$
thus $(b,a)$ is a left biunit pair. Similarly for the right case.

\ref{bi:eq2}. Observe that if $(a,b)$ and $(a,b')$ are biunit pairs, then
$$
b'=[[a,b,b'],a,b]=[a,[a,b',b],b]=[a,b,b]=b,
$$
and $b$ is unique.

\ref{bi:eq3}. Let us assume that $(a,b)$ is a biunit pair and $b$ is a right biunit. Then for all $x\in S$

$$
x=[a,b,[b,a,x]]=[a,[a,b,b],x]=[a,a,x],
$$
so $a$ is a left biunit. Proof for the right biunit pair is analogous.
\end{proof}

%\begin{remark}\label{}
%If $f:S\to S'$ is a surjective homomorphism of semiheaps and $(a,b)\in S\times S$ is a biunit pair, then $(f(a),f(b))$ is a biunit pair in $S'$.
%\end{remark}

\begin{lemma}\label{lem:biunit-pre}
Let $S$ be a semiheap with a biunit pair $(a,b)$, then $S_{\varphi}$, for a bijective warp $\varphi$, also has a biunit pair, explicitly $(a,c)$, where $\varphi(c)=b$.
\end{lemma}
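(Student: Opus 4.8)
The plan is to verify directly that $(a,c)$, with $c:=\varphi^{-1}(b)$, satisfies the two defining conditions of a biunit pair in the twisted semiheap $S_\varphi$. The element $c$ is well defined because $\varphi$ is a bijection, and $S_\varphi$ is a genuine semiheap by Lemma \ref{lem:sgtosh}, so it suffices to check $[a,c,x]_\varphi=x$ and $[x,a,c]_\varphi=x$ for all $x\in S$, where $[u,v,w]_\varphi=[u,\varphi(v),w]$.

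The left condition I expect to be immediate: unfolding the twisted bracket gives
$$[a,c,x]_\varphi=[a,\varphi(c),x]=[a,b,x]=x,$$
using $\varphi(c)=b$ together with the left biunit property of $(a,b)$ in $S$.

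The right condition is the one I expect to be the main obstacle, since in $[x,a,c]_\varphi=[x,\varphi(a),c]$ the middle slot now carries $\varphi(a)$ rather than $\varphi(c)$, so the bracket does not collapse by a naive substitution. My plan is to apply $\varphi$ and push it through using the warp identity of Definition \ref{def:warp} (read with middle entry $\varphi(a)$):
$$\varphi\big([x,\varphi(a),c]\big)=[\varphi(x),a,\varphi(c)]=[\varphi(x),a,b]=\varphi(x),$$
where the final equality is the right biunit property of $(a,b)$. Injectivity of $\varphi$ then yields $[x,a,c]_\varphi=x$, which is exactly the right biunit condition in $S_\varphi$.

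Combining the two checks shows that $(a,c)$ is a biunit pair of $S_\varphi$. The only genuinely delicate step is the right condition: everything hinges on being able to transport the computation through $\varphi$ via the warp law and then cancel $\varphi$ using its injectivity, whereas the left condition and the existence of $c$ are routine consequences of the twist definition and the bijectivity of $\varphi$.
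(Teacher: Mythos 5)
Your argument is correct and coincides with the paper's own proof: the left condition follows by direct substitution $\varphi(c)=b$, and the right condition is obtained by applying $\varphi$, invoking the warp identity to get $[\varphi(x),a,\varphi(c)]=[\varphi(x),a,b]=\varphi(x)$, and cancelling by injectivity. No differences worth noting.
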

\begin{proof}
Let $(a,b)$ be a biunit pair and $\varphi$ be a bijective warp. Then there exist $c\in S$ such that $\varphi(c)=b$, and for all $x\in S$
$$[a,c,x]_\varphi=[a,\varphi(c),x]=[a,b,x]=x,$$
and 
$$\varphi([x,\varphi(a),c])=[\varphi(x),a,\varphi(c)]=\varphi(x).$$
Observe that since $\varphi$ is injective we get that
$$
[x,a,c]_{\varphi}=[x,\varphi(a),c]=x,
$$
and $(a,c)$ is a biunit in $S_\varphi$.
\end{proof}
\begin{example}
Observe that not all semiheap structures on the same base set are warp equivalent. Let us denote by $\mathrm{H}(\mathbb{Z})$ a heap $(\mathbb{Z},[-,-,-])$, where $[a,b,c]=a-b+c$ for all $a,b,c\in \mathbb{Z}$, and by $\mathrm{H}(\mathbb{Z}_0)$ a semiheap, where $[a,b,c]=0$ for all $a,b,c\in \mathbb{Z}$. Semiheaps $\mathrm{H}(\mathbb{Z})$ and $\mathrm{H}(\mathbb{Z}_0)$ are not warp equivalent as by Lemma \ref{lem:biunit-pre}, warp equivalence preserve biunit pairs.
\end{example}

\begin{lemma}
Let $S$ be a monoid with two different biunit pairs $(a,b)$ and $(c,d)$. Then monoids $(S,\MiddleB{a},b)$ and $(S,\MiddleB{c},d)$ are isomorphic. Moreover, switches $\mu_{bb}$ and $\mu_{dd}$ are related by equation
$$
\lambda_{da}\circ \mu_{bb}=\mu_{dd}\circ \lambda_{ad}^{-1}
$$
\end{lemma}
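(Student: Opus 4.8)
The plan is to read ``$S$ is a monoid'' as ``$S$ is a semiheap'' (biunit pairs live in semiheaps) and to treat the two monoids as the retract-monoids attached to the two biunit pairs, producing the isomorphism as an explicit left shift. First I would record the only identities used: by Lemma \ref{lem:biunit:prop} the reversed pairs $(b,a)$ and $(d,c)$ are again biunit pairs, so
\[
[a,b,x]=[b,a,x]=[x,a,b]=[x,b,a]=x,\qquad [c,d,x]=[d,c,x]=[x,c,d]=[x,d,c]=x
\]
for all $x\in S$. These are exactly the hypotheses of Lemma \ref{lem:imp:switch}, so by Lemma \ref{lem:mainproof1} and Corollary \ref{cor:mainproof2} the pairs $(S,\MiddleB{a},b)$ and $(S,\MiddleB{c},d)$ are monoids carrying the switches $\mu_{bb}$ and $\mu_{dd}$, with $\mu_{bb}(a)=b$ and $\mu_{dd}(c)=d$.

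For the isomorphism I would take the left shift $\lambda_{da}\colon (S,\MiddleB{a},b)\to(S,\MiddleB{c},d)$, $x\mapsto[d,a,x]$, and claim its inverse is $\lambda_{bc}$. The homomorphism property reduces $\lambda_{da}(x)\MiddleB{c}\lambda_{da}(y)=[[d,a,x],c,[d,a,y]]$ to $[[d,a,x],a,y]=\lambda_{da}(x\MiddleB{a}y)$ by the single cancellation $[z,c,d]=z$, and $\lambda_{da}(b)=[d,a,b]=d$ sends unit to unit. That $\lambda_{bc}$ is a two-sided inverse follows from $[b,c,d]=b$ together with $[b,a,x]=x$ in one direction and from $[d,a,b]=d$ together with $[d,c,x]=x$ in the other; since a bijective monoid homomorphism is an isomorphism, this settles the first claim.

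For the switch relation I would first mirror the previous computation to show $\lambda_{ad}$ is bijective with $\lambda_{ad}^{-1}=\lambda_{cb}$ (using $[c,b,a]=c$, $[c,d,x]=x$ and $[a,d,c]=a$, $[a,b,x]=x$), which legitimizes the symbol $\lambda_{ad}^{-1}$ in the statement. It then suffices to prove $\lambda_{da}\circ\mu_{bb}=\mu_{dd}\circ\lambda_{cb}$, and I would do this by evaluating both sides to the same map $x\mapsto[d,x,b]$:
\[
\lambda_{da}(\mu_{bb}(x))=[d,a,[b,x,b]]=[[d,a,b],x,b]=[d,x,b],
\]
\[
\mu_{dd}(\lambda_{cb}(x))=[d,[c,b,x],d]=[[d,x,b],c,d]=[d,x,b],
\]
where the first line uses $[d,a,b]=d$ and the second uses the middle-argument para-associativity $[p,[q,r,s],t]=[[p,s,r],q,t]$ followed by $[z,c,d]=z$. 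Substituting $\lambda_{cb}=\lambda_{ad}^{-1}$ yields the displayed equation.

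The individual checks are pure bracket manipulation, so the real work is choosing the right shifts and the right rewrite directions. The guiding principle is the inversion rule $\lambda_{xy}^{-1}=\lambda_{\bar y\bar x}$, where the bar sends a biunit-pair element to its partner ($\bar a=b$, $\bar c=d$, and conversely); once this is in hand, each associativity rearrangement terminates in a single biunit cancellation. The one genuinely delicate step I expect is the middle-argument swap $[p,[q,r,s],t]=[[p,s,r],q,t]$ in the switch computation, since it reverses the order of the inner entries and must be paired with the correct biunit pair in order to collapse.
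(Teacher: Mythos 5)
Your proposal is correct and follows essentially the same route as the paper's own proof (given there for the proposition restating this lemma with ``semiheap'' in place of ``monoid''): the isomorphism is the left shift $\lambda_{da}$ with inverse $\lambda_{bc}$, the identification $\lambda_{ad}^{-1}=\lambda_{cb}$ is established the same way, and the switch relation is verified by collapsing both sides to $x\mapsto[d,x,b]$. If anything your version is slightly more careful: you check the inverse on both sides, and your homomorphism computation correctly lands on $\lambda_{da}(x\MiddleB{a}y)$, whereas the paper's displayed chain contains a slip ($[[d,a,x],[a,d,c],y]$ should reduce to $[[d,a,x],a,y]$, not $[[d,a,x],c,y]$).
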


\begin{lemma}\label{lem:bicurryings}
Let $(S,[-,-,-])$ be a semiheap with a biunit pair $(a,b)$. Then the set 
$$U:=\langle \mathrm{Id}, \mu_{aa}, \mu_{bb}, \mu_{ab}, \mu_{ba} \rangle,$$
where $\mu$ are the middle shifts induced by the pair $(a,b)$, see \eqref{def:unaries}, is a subgroup of the group $(\mathrm{Aut}_{\mathrm{Set}}(S),\circ)$, that is a group of bijections with the composition of functions.
\end{lemma}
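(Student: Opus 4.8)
The plan is to exploit the defining notation: since $U=\langle \mathrm{Id}, \mu_{aa}, \mu_{bb}, \mu_{ab}, \mu_{ba}\rangle$ is by definition the subgroup of $(\mathrm{Aut}_{\mathrm{Set}}(S),\circ)$ generated by the five listed maps, the subgroup generated by any subset of a group is automatically a subgroup. Hence the only thing that genuinely requires proof is that each generator actually lies in $\mathrm{Aut}_{\mathrm{Set}}(S)$, i.e.\ that $\mu_{aa}$, $\mu_{bb}$, $\mu_{ab}$, $\mu_{ba}$ are bijections of $S$ (the map $\mathrm{Id}$ being trivially so). Throughout I would use two facts: the para-associativity identity \eqref{semiheap}, and that by Lemma \ref{lem:biunit:prop}\ref{bi:eq1} \emph{both} $(a,b)$ and $(b,a)$ are biunit pairs, so all four relations $[a,b,x]=[x,a,b]=[b,a,x]=[x,b,a]=x$ are available.

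The heart of the argument is four short computations exhibiting explicit inverses. First I would show that $\mu_{aa}$ and $\mu_{bb}$ are mutually inverse: applying \eqref{semiheap} and then the biunit relations gives $\mu_{aa}(\mu_{bb}(x))=[a,[b,x,b],a]=[a,b,[x,b,a]]=[a,b,x]=x$, and symmetrically $\mu_{bb}(\mu_{aa}(x))=[b,[a,x,a],b]=[b,a,[x,a,b]]=[b,a,x]=x$. Thus $\mu_{aa}^{-1}=\mu_{bb}$ and both are bijections. Next I would show $\mu_{ab}$ and $\mu_{ba}$ are involutions: $\mu_{ab}(\mu_{ab}(x))=[a,[a,x,b],b]=[a,b,[x,a,b]]=[a,b,x]=x$, and likewise $\mu_{ba}(\mu_{ba}(x))=[b,[b,x,a],a]=[b,a,[x,b,a]]=[b,a,x]=x$. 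Hence $\mu_{ab}$ and $\mu_{ba}$ are their own inverses, so they too are bijections.

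With all four generators (and $\mathrm{Id}$) shown to lie in $\mathrm{Aut}_{\mathrm{Set}}(S)$, the generated set $U$ is a subgroup of $(\mathrm{Aut}_{\mathrm{Set}}(S),\circ)$, completing the proof. I expect the only real obstacle to be bookkeeping: at each reduction one must match the triple being simplified to the correct instance of \eqref{semiheap} (choosing among $[[\,,\,,\,],\,,\,]=[\,,[\,,\,,\,],\,]=[\,,\,,[\,,\,,\,]]$) and then invoke the appropriate one of the two biunit pairs $(a,b)$ or $(b,a)$; once the indices are tracked carefully the simplifications are forced. It is worth remarking that $U$ is in general strictly larger than the five listed maps — for instance $\mu_{ab}\circ\mu_{ba}$ reduces to $x\mapsto[a,a,[x,b,b]]$, which need not be among them — so $\langle\,\cdot\,\rangle$ must indeed be read as ``subgroup generated by'' rather than as the bare five-element set.
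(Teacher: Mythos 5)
Your proposal is correct and takes essentially the same route as the paper: both reduce the claim to showing the four nontrivial generators are invertible endofunctions (the paper asserts the relations $\mu_{aa}\circ\mu_{bb}=\mu_{ab}^2=\mu_{ba}^2=\mathrm{Id}$ without computation, which you verify explicitly, and you additionally check the composition $\mu_{bb}\circ\mu_{aa}$ in the other order, which is needed for a two-sided inverse). Your closing remark that $\langle\,\cdot\,\rangle$ must be read as ``generated subgroup'' is a fair reading of the statement and does not conflict with the paper's argument.
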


\begin{proof}
Since all of the curryings of a pair of elements are endofunctions, it is enough to observe that all the generators are invertible. One can easily check that 
$$
\begin{aligned}
\mu_{aa} \circ \mu_{bb} &= \mu_{ab}^2=\mu_{ba}^2 = \lambda_{ab} = \rho_{ab} = \mathrm{Id},
\end{aligned}
$$
and $(U,\circ)$ is a group, thus it is a subgroup of $(\mathrm{Aut}_{\mathrm{Set}}(S),\circ)$. 
\end{proof}

\begin{corollary}\label{cor:warpequiv}
Observe that all the unary curryings induced by a biunit pair $(a,b)$ belongs to the set $U$. Moreover,  $S$ is wrap equivalent to $S_{\lambda_{aa}}$, $S_{\lambda_{bb}}$, $S_{\rho_{aa}}$ and $S_{\rho_{bb}}$.
\end{corollary}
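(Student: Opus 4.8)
The plan is to prove the two assertions of the corollary separately: first, that every unary currying built from the biunit pair $(a,b)$ lies in the group $U$ of Lemma \ref{lem:bicurryings}; and second, that each of the four shifts $\lambda_{aa},\lambda_{bb},\rho_{aa},\rho_{bb}$ is realized by a single bijective warp of $S$, which immediately yields the stated warp equivalences.

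For the membership claim I would enumerate the twelve unary curryings $\lambda_{xy},\mu_{xy},\rho_{xy}$ with $x,y\in\{a,b\}$ and dispose of them in three batches. The four middle shifts $\mu_{aa},\mu_{bb},\mu_{ab},\mu_{ba}$ are the generators of $U$, so they lie in $U$ by construction. For the mixed-index left and right shifts I would invoke Lemma \ref{lem:biunit:prop}(\ref{bi:eq1}), which ensures that $(b,a)$ is again a biunit pair; the defining identities \eqref{biunit} for $(a,b)$ and $(b,a)$ then give $\lambda_{ab}=\lambda_{ba}=\rho_{ab}=\rho_{ba}=\mathrm{Id}\in U$ at once.

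The only genuine computation concerns $\lambda_{aa},\lambda_{bb},\rho_{aa},\rho_{bb}$, and the key is to write the first two as words in the generators. Using para-associativity together with $\rho_{ab}=\mathrm{Id}$ I would verify
$$
\mu_{ab}\bigl(\mu_{aa}(x)\bigr)=[a,[a,x,a],b]=[[a,a,x],a,b]=[a,a,x]=\lambda_{aa}(x),
$$
so that $\lambda_{aa}=\mu_{ab}\circ\mu_{aa}\in U$; an entirely parallel manipulation using $\lambda_{ab}=\mathrm{Id}$ gives $\mu_{aa}\circ\mu_{ab}=\rho_{aa}\in U$. Since $U$ is a group in which $\mu_{bb}=\mu_{aa}^{-1}$ and $\mu_{ab}^{2}=\mathrm{Id}$ (Lemma \ref{lem:bicurryings}), the remaining two follow by inversion, $\lambda_{bb}=\lambda_{aa}^{-1}=\mu_{bb}\circ\mu_{ab}$ and $\rho_{bb}=\rho_{aa}^{-1}=\mu_{ab}\circ\mu_{bb}$; one can cross-check $\lambda_{aa}\circ\lambda_{bb}=\mathrm{Id}$ directly, since $[a,a,[b,b,x]]=[[a,a,b],b,x]=[a,b,x]=x$. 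This settles the first assertion.

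For the warp-equivalence claim the hard work is already done. Lemma \ref{lem:shifts} shows that $\lambda_{ee}$ and $\rho_{ee}$ are warps for every $e\in S$, so $\lambda_{aa},\lambda_{bb},\rho_{aa},\rho_{bb}$ are warps of $S$; and the first assertion places them in $U\subseteq\mathrm{Aut}_{\mathrm{Set}}(S)$, so each is a bijection. Each is therefore a single bijective warp of $S$, and the definition of warp equivalence gives $S\sim S_{\lambda_{aa}}$, $S\sim S_{\lambda_{bb}}$, $S\sim S_{\rho_{aa}}$ and $S\sim S_{\rho_{bb}}$. I expect the only real obstacle to be the bookkeeping in the third paragraph: spotting, for each of $\lambda_{aa}$ and $\rho_{aa}$, the composition of two middle shifts that collapses—through one application of the para-associative law and one biunit cancellation—to the desired shift. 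Once those two identities are in hand, the rest is formal computation inside $U$ together with the already-established Lemma \ref{lem:shifts}.
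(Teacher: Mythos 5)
Your proof is correct and follows essentially the same route as the paper: write $\lambda_{aa},\lambda_{bb},\rho_{aa},\rho_{bb}$ as composites of the middle shifts generating $U$, then invoke Lemma \ref{lem:shifts} together with invertibility ($\lambda_{aa}\circ\lambda_{bb}=\rho_{aa}\circ\rho_{bb}=\mathrm{Id}$) to get the warp equivalences. As a bonus, your composition orders (e.g.\ $\mu_{ab}\circ\mu_{aa}=\lambda_{aa}$ versus $\mu_{aa}\circ\mu_{ab}=\rho_{aa}$) are the correct ones, whereas the paper's displayed identities assign the same composites to both the $\lambda$'s and the $\rho$'s --- a typo that your computation silently repairs.
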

\begin{proof}
Observe that if $(a,b)$ is a biunit pair, then
$$
\begin{aligned}
    &\mu_{aa} \circ \mu_{ab} = \mu_{ba} \circ \mu_{aa} = \rho_{aa},\  \mu_{bb} \circ \mu_{ba} = \mu_{ab} \circ \mu_{bb} = \rho_{bb},\\
    &\mu_{aa} \circ \mu_{ab} = \mu_{ba} \circ \mu_{aa} = \lambda_{aa},\  \mu_{bb} \circ \mu_{ba} = \mu_{ab} \circ \mu_{bb} = \lambda_{bb},
\end{aligned}
$$
thus $\rho_{aa},\rho_{bb},\lambda_{aa},\lambda_{bb}\in U$. Since, by Lemma \ref{lem:shifts}, all of those left and right shifts are warps and are invertible with inverses:
$$
\lambda_{aa} \circ \lambda_{bb} = \rho_{aa} \circ \rho_{bb} = \mathrm{Id},
$$
we get that those are warp equivalences.
\end{proof}

\begin{lemma}
Let $(S,[-,-,-])$ be a semiheap with a biunit pair $(a,c)$. Then curryings $\mu_{ac}$ and $\mu_{ca}$ are anti-automorphism of semigroups $(S,\MiddleB{a})$ and $(S,\MiddleB{c})$. Moreover, $\mu_{aa}:(S,\MiddleB{a})\to (S,\MiddleB{c})$ is an anti-isomorphism of semigroups.
\end{lemma}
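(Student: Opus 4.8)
The plan is to separate the two distinct requirements packaged in the statement—bijectivity, and the reversing of products—and to exploit the symmetry of the biunit pair. Since $(a,c)$ is a biunit pair, so is $(c,a)$ by Lemma~\ref{lem:biunit:prop}, so all four collapse rules $[a,c,x]=[c,a,x]=[x,a,c]=[x,c,a]=x$ are at our disposal. Bijectivity is then immediate from Lemma~\ref{lem:bicurryings} applied to the pair $(a,c)$: the generators there satisfy $\mu_{ac}^2=\mu_{ca}^2=\mathrm{Id}$ and $\mu_{aa}\circ\mu_{cc}=\mathrm{Id}$, so $\mu_{ac}$ and $\mu_{ca}$ are involutions while $\mu_{aa}$ is a bijection of $S$ with inverse $\mu_{cc}$. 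As each retract shares the underlying set $S$, it remains only to check the relevant anti-homomorphism identities for $\MiddleB{a}$ and $\MiddleB{c}$. I read the first claim as the diagonal pairing: $\mu_{ac}$ is an anti-automorphism of $(S,\MiddleB{a})$ and $\mu_{ca}$ is an anti-automorphism of $(S,\MiddleB{c})$.

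For the first half I would verify $\mu_{ac}(x\MiddleB{a}y)=\mu_{ac}(y)\MiddleB{a}\mu_{ac}(x)$ by a short bracket computation. The decisive move is to push the inner triple rightward using the semiheap axiom: $\mu_{ac}(x\MiddleB{a}y)=[a,[x,a,y],c]=[a,y,[a,x,c]]$, while on the other side $\mu_{ac}(y)\MiddleB{a}\mu_{ac}(x)=[[a,y,c],a,[a,x,c]]=[a,y,[c,a,[a,x,c]]]$, and here the factor $[c,a,-]$ collapses by the biunit rule to $[a,x,c]$, so the two sides agree. The companion statement for $\mu_{ca}$ on $(S,\MiddleB{c})$ follows by interchanging the roles of $a$ and $c$ throughout—legitimate precisely because $(c,a)$ is also a biunit pair—now using the collapse $[a,c,-]=\mathrm{Id}$ in place of $[c,a,-]=\mathrm{Id}$. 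Together with the involutivity recorded above, each of $\mu_{ac}$ and $\mu_{ca}$ is thereby an anti-automorphism of its respective retract.

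For the final statement I would run the same calculation with the right endpoint $c$ replaced by $a$ but with the target operation taken to be $\MiddleB{c}$: one computes $\mu_{aa}(x\MiddleB{a}y)=[a,[x,a,y],a]=[a,y,[a,x,a]]$ and $\mu_{aa}(y)\MiddleB{c}\mu_{aa}(x)=[[a,y,a],c,[a,x,a]]=[a,y,[a,c,[a,x,a]]]$, where the inserted $c$ is exactly what triggers the biunit collapse $[a,c,-]=\mathrm{Id}$, making the two sides coincide. Hence $\mu_{aa}$ is an anti-homomorphism from $(S,\MiddleB{a})$ to $(S,\MiddleB{c})$, and being a bijection with inverse $\mu_{cc}$, it is an anti-isomorphism.

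The calculations are brief, and the single point demanding care—the only genuine obstacle—is choosing at each step which of the two equalities in the semiheap axiom to invoke, so that the one biunit identity available for the retract in play ($[c,a,-]=\mathrm{Id}$ for $\MiddleB{a}$, $[a,c,-]=\mathrm{Id}$ for $\MiddleB{c}$) lands on the correct inner triple. Applying the wrong collapse, such as attempting $[a,a,-]$ or $[c,c,-]$, stalls the reduction, and this is exactly why $\mu_{ac}$ is tied to $\MiddleB{a}$ and $\mu_{ca}$ to $\MiddleB{c}$ rather than to each other's retract.
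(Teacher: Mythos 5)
Your proof is correct and takes essentially the same route as the paper: the anti-homomorphism identities are verified by the same short reassociate-and-collapse bracket computations (the paper starts from $\mu_{ac}(x)\MiddleB{a}\mu_{ac}(y)$ and inserts $a=[a,c,a]$ where you instead meet both sides at a common expression, a cosmetic difference), and bijectivity is imported from Lemma \ref{lem:bicurryings} exactly as the paper does. The one substantive difference is coverage of the ambiguously phrased first claim: the paper asserts that $\mu_{ac}$ (and likewise $\mu_{ca}$) is an anti-automorphism of \emph{both} retracts, declaring the case of $(S,\MiddleB{c})$ ``analogous'', whereas you prove only the diagonal pairing; the off-diagonal case does hold but requires the extra collapse $[c,c,a]=c$ (an instance of $[x,c,a]=x$), as in $[[a,y,c],c,[a,x,c]]=[a,y,[[c,c,a],x,c]]=[a,y,[c,x,c]]=\mu_{ac}(x\MiddleB{c}y)$, so if the stronger reading is intended you should add that line.
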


\begin{proof}
Indeed, let $x,y\in S$, then
$$
\begin{aligned}
\mu_{ac}(x)\MiddleB{a}\mu_{ac}(y)&=[[a,x,c],a,[a,y,c]]=[a,[a,c,x],[a,y,c]]=[a,x,[a,y,c]]\\ &=[a,[y,a,x],c]=\mu_{ac}(y\MiddleB{a}x),
\end{aligned}$$
where third and fifth follows by the associativity of semiheap and the fourth one follows by the fact that $(a,c)$ is a biunit pair. Thus $\mu_{ac}$ is an anti-endomorphism of the semigroup $(S,\MiddleB{a})$. Since $\mu_{ac}$ is an involution, due to Lemma \ref{lem:bicurryings}, $\mu_{ac}$ is an automorphism. Proof for the semigroup $(S,\MiddleB{c})$ is analogous. The case of  $\mu_{ca}$ can be proven the same way. 

The second statement is observation that for all $x,y\in S$,
$$
\begin{aligned}
\mu_{aa}(x\MiddleB{a}y)&=[a,[x,a,y],a]=[a,[x,[a,c,a],y],a]=[[a,y,[a,c,a]],x,a]=[[a,y,a],c,[a,x,a]]\\ &=\mu_{aa}(y)\MiddleB{c}\mu_{aa}(x).
\end{aligned}
$$
Thus, $\mu_{aa}$ is an anti-homomorphism, and by Lemma \ref{lem:bicurryings}, it is an anti-isomorphism.
\end{proof}

\begin{lemma}\label{lem:biunit:subset}
A set of all left (right) biunit pairs is a subsemiheap of a product semiheap $S\times S$ with ternary operation given on coordinates, that is for all left biunit pairs $(a,b),(c,d),(e,g)$ $[(a,b),(c,d),(e,g)]:=([a,c,e],[b,d,g])$.
\end{lemma}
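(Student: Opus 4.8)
The plan is to verify the two defining axioms of a semiheap directly on the candidate subset, reducing everything to the fact that $S \times S$ is already a semiheap under the coordinatewise operation. The genuinely new content is not the associativity identities (those are inherited) but the \emph{closure} of the set of left biunit pairs under the ternary bracket: I must show that if $(a,b)$, $(c,d)$, $(e,g)$ are left biunit pairs, then $([a,c,e],[b,d,g])$ is again a left biunit pair.

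First I would observe that the product $S \times S$ with the coordinatewise bracket is a semiheap; this is routine since each axiom in \eqref{semiheap} holds in each coordinate. Hence a subset that is closed under the operation is automatically a subsemiheap, and the only thing to check is closure. So I would fix three left biunit pairs $(a,b), (c,d), (e,g)$ and let $(p,q) := ([a,c,e],[b,d,g])$. I must verify that for all $x \in S$ the left biunit condition $[p,q,x]=x$ holds, i.e.
\begin{equation*}
    [[a,c,e],[b,d,g],x]=x.
\end{equation*}
The strategy is to peel off the outer structure using generalized associativity \eqref{semiheap} so as to expose the innermost left biunit condition. Concretely, applying the middle and right forms of \eqref{semiheap} I expect to rewrite $[[a,c,e],[b,d,g],x]$ as a nested expression in which a subterm of the form $[a,c,\,\cdot\,]$ or $[c,d,\,\cdot\,]$ appears, at which point the hypotheses $[a,c,-]=\id$ (equivalently, after using part \ref{bi:eq1} of Lemma \ref{lem:biunit:prop}, that $(c,d)$ and $(a,b)$ act as left units) collapse the expression down to $x$.

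The main obstacle, and the step deserving the most care, is orchestrating the associativity moves in the correct order: there is no cancellation in a semiheap, so I cannot simply "divide out," and I must instead arrange the parenthesization so that each appeal to $[a,c,-]=x$ or $[b,d,-]=x$ fires on a genuinely left-most pair. The likely successful route is to use the identity $[[a,c,e],[b,d,g],x]=[a,[[b,d,g],e,c],x]$ (the middle equation of \eqref{semiheap}), then simplify the inner bracket $[[b,d,g],e,c]$ using that $(b,d)$ is a left biunit pair, and iterate until the whole thing reduces to $[a,c,x]=x$ (invoking that $(a,c)$ — or the relevant reordered pair — is a left biunit pair). The right biunit case is entirely symmetric, using the right-hand forms of \eqref{biunit} and the reverse associativity move, so I would prove one case in full and remark that the other follows \emph{mutatis mutandis}.
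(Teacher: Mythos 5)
Your reduction of the problem to a closure check is correct, and your opening move $[[a,c,e],[b,d,g],x]=[a,[[b,d,g],e,c],x]$ is a legitimate instance of \eqref{semiheap}; the paper's proof follows the mirror-image route $[[a,c,e],[b,d,g],x]=[[[a,c,e],g,d],b,x]=[[a,c,[e,g,d]],b,x]=[[a,c,d],b,x]$. The gap is in your endgame, which rests on misidentified hypotheses: you appeal to ``$[a,c,-]=\id$'' and to ``$(b,d)$ is a left biunit pair'', but $(a,c)$ pairs two \emph{first} components and $(b,d)$ two \emph{second} components, so neither is (nor follows from) a hypothesis; the given pairs are $(a,b),(c,d),(e,g)$ and, by Lemma \ref{lem:biunit:prop}\,\eqref{bi:eq1}, their swaps. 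Carrying out your route with only the actual hypotheses gives $[[b,d,g],e,c]=[b,d,[g,e,c]]=[b,d,c]$, which leaves you at $[a,[b,d,c],x]$, and the subterm $[b,d,c]$ does not reduce. This is not a fixable bookkeeping slip: closure of the set of \emph{left} biunit pairs alone is actually false. In the bicyclic monoid $\langle p,q\mid pq=1\rangle$ with the antihomomorphic involution exchanging $p$ and $q$ and $[a,b,c]:=ab^{*}c$, the pairs $(1,1)$ and $(p,p)$ are left biunit pairs, yet $[(1,1),(p,p),(1,1)]=([1,p,1],[1,p,1])=(q,q)$ and $[q,q,1]=qp\neq 1$.

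What actually closes the argument is the \emph{two-sided} biunit hypothesis, needed at exactly the point where you get stuck: from $[[a,c,d],b,x]$ one invokes the right-biunit identity $[a,c,d]=a$ for the pair $(c,d)$ and then $[a,b,x]=x$. So your plan to prove the left case ``in full'' using only the left identities and dispatch the right case by symmetry cannot succeed; both halves of the biunit condition enter each verification. (For what it is worth, the paper's own intermediate step $[[a,c,d],b,x]=[a,[d,c,b],x]$ is also a misapplication of \eqref{semiheap} --- the correct right-hand side is $[a,[b,d,c],x]$ --- so the finish via $[a,c,d]=a$ is the one to use.)
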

\begin{proof}
Let $(a,b),(c,d)$ and $(e,g)$ be biunit pairs and $x\in S$, then 
$$
\begin{aligned}
[[a,c,e],[b,d,g],x]&=[[[a,c,e],g,d],b,x]=[[a,c,[e,g,d]],b,x]=[[a,c,d],b,x]=[a,[d,c,b],x]\\ &=[a,b,x]=x.
\end{aligned}
$$
Thus $([a,c,e],[b,d,g])$ is a left biunit pair. Associativity, follows from the definition of a ternary operation on the product.
\end{proof}

\begin{corollary}
A set $\mathrm{B}:=\{a\in S\ |\ \exists{b\in S} \ (a,b) \text{\  is\  a\  biunit\  pair}\}$ is a subsemiheap of $S$.
\end{corollary}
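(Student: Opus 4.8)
The plan is to realize $\mathrm{B}$ as the image of the subsemiheap of biunit pairs under the first-coordinate projection, and then to invoke the elementary fact that a homomorphic image of a subsemiheap is again a subsemiheap. Write $P\subseteq S\times S$ for the set of all biunit pairs. By Lemma \ref{lem:biunit:subset}, applied in both its left and its right form, $P$ is a subsemiheap of the product semiheap $S\times S$, whose bracket is computed coordinatewise.

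First I would observe that the projection $\pi_1\colon S\times S\to S$, $(x,y)\mapsto x$, is a morphism of semiheaps: since the product bracket acts entrywise, $\pi_1([(a,b),(c,d),(e,g)])=[a,c,e]=[\pi_1(a,b),\pi_1(c,d),\pi_1(e,g)]$. As $\mathrm{B}=\{a\in S\mid \exists\, b\in S,\ (a,b)\in P\}$ is precisely the set-theoretic image $\pi_1(P)$, closure of $\mathrm{B}$ under the ternary operation is immediate. Unwound by hand, the argument is: given $a,c,e\in\mathrm{B}$, choose partners $b,d,g\in S$ so that $(a,b)$, $(c,d)$ and $(e,g)$ are biunit pairs; by Lemma \ref{lem:biunit:subset} the coordinatewise bracket $([a,c,e],[b,d,g])$ is again a biunit pair, whence $[a,c,e]\in\mathrm{B}$ with partner $[b,d,g]$. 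Since the para-associativity identity \eqref{semiheap} is inherited from $S$, the pair $(\mathrm{B},[-,-,-])$ is a subsemiheap.

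There is essentially no obstacle here: all of the content is carried by Lemma \ref{lem:biunit:subset}, and the corollary only needs the observation that $\pi_1$ is a semiheap morphism together with the identification $\mathrm{B}=\pi_1(P)$. The single point to keep track of is that Lemma \ref{lem:biunit:subset} must be applied to genuine (two-sided) biunit pairs; this is legitimate because that lemma holds in both its left and right versions, and a coordinatewise bracket of pairs that are simultaneously left and right biunit is again simultaneously left and right biunit. (By Lemma \ref{lem:biunit:prop}(\ref{bi:eq2}) the partner $b$ of each $a\in\mathrm{B}$ is in fact unique, but uniqueness plays no role, since membership in $\mathrm{B}$ only demands the existence of some partner.)
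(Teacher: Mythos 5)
Your argument is correct and is exactly the intended derivation: the paper states this corollary without proof as an immediate consequence of Lemma \ref{lem:biunit:subset}, and your route --- the set of two-sided biunit pairs is closed under the coordinatewise bracket by the left and right forms of that lemma, and $\mathrm{B}$ is its image under the first projection, which is a semiheap morphism --- is precisely how it follows. Your side remark that only existence, not uniqueness, of the partner is needed is also accurate.
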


\begin{lemma}\label{lem:mainproof1}
Let $(S,[-,-,-])$ be a semiheap with a biunit pair $(a,b)$. Then $\mu_{bb}$ is a switch for the semigroup $(S,\MiddleB{a})$ and for all $x,y,z\in S$ $[x,y,z]=x\MiddleB{a}\mu_{bb}(y)\MiddleB{a}z$.
\end{lemma}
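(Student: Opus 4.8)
The statement has two parts: first, that $\mu_{bb}$ is a switch for the semigroup $(S,\MiddleB{a})$; second, that the original ternary operation is recovered via $[x,y,z]=x\MiddleB{a}\mu_{bb}(y)\MiddleB{a}z$. I would tackle the recovery formula first, since it is the more direct of the two and will also clarify why the switch condition should hold. Unwinding the definitions, $x\MiddleB{a}\mu_{bb}(y)\MiddleB{a}z = [[x,a,[b,y,b]],a,z]$. The plan is to push the $a$'s through using the semiheap associativity \eqref{semiheap} and then collapse the resulting occurrences of the biunit pair $(a,b)$ using \eqref{biunit}. Concretely, I expect to move brackets so that a factor of the form $[a,b,-]$ or $[-,a,b]$ appears, at which point the left/right biunit identities $[a,b,x]=x$ and $[x,a,b]=x$ eliminate the spurious $a,b$'s, leaving $[x,y,z]$.

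For the switch condition, I must verify that $\mu_{bb}(x\MiddleB{a}\mu_{bb}(y)\MiddleB{a}z)=\mu_{bb}(z)\MiddleB{a}y\MiddleB{a}\mu_{bb}(x)$. There are two natural routes. The cleaner one is to invoke the already-established recovery formula: since $[x,y,z]=x\MiddleB{a}\mu_{bb}(y)\MiddleB{a}z$, the inner expression $x\MiddleB{a}\mu_{bb}(y)\MiddleB{a}z$ is just $[x,y,z]$, so I need $\mu_{bb}([x,y,z])=\mu_{bb}(z)\MiddleB{a}y\MiddleB{a}\mu_{bb}(x)$. Expanding the left side as $[b,[x,y,z],b]$ and the right side as $[[b,z,b],a,[y,a,[b,x,b]]]$, the task reduces to a semiheap identity that no longer mentions the switch. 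Alternatively, I can appeal to Lemma \ref{lem:imp:switch}: that lemma shows $\mu_{ee}$ is a switch for $(S,\MiddleB{a})$ whenever $[e,a,x]=[a,e,x]=[x,e,a]=[x,a,e]$ for all $x$. If one can check that $e=b$ satisfies this compatibility condition relative to the biunit pair $(a,b)$, the switch property follows immediately.

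I therefore expect the main obstacle to be the bookkeeping in the direct computation: repeatedly applying the three-fold associativity \eqref{semiheap} to relocate the central arguments, and recognizing at each stage exactly which copy of $(a,b)$ is positioned to be annihilated by a biunit identity. The key facts in play are \eqref{semiheap}, the left and right biunit equations \eqref{biunit}, and Lemma \ref{lem:biunit:prop}\ref{bi:eq1} (so that $(b,a)$ is also a biunit pair, giving $[b,a,x]=x=[x,b,a]$), which widens the set of cancellations available. The cleanest writeup will likely prove the recovery formula by a short associativity chain, then derive the switch condition as a consequence of that formula together with the semiheap axioms, rather than verifying both identities from scratch in parallel.
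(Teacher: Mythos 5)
Your proposal is correct, and it contains the paper's argument as one of its two branches. For the recovery formula you plan exactly the computation the paper performs, namely $x\MiddleB{a}\mu_{bb}(y)\MiddleB{a}z=[[x,a,[b,y,b]],a,z]=[[x,a,b],y,[b,a,z]]=[x,y,z]$, using para-associativity together with the fact that $(b,a)$ is again a biunit pair (Lemma \ref{lem:biunit:prop}\eqref{bi:eq1}). For the switch property the paper takes what you label the \emph{alternative} route: it invokes Lemma \ref{lem:imp:switch} with $e=b$, whose hypothesis $[b,a,x]=[a,b,x]=[x,b,a]=[x,a,b]$ holds because all four expressions equal $x$ for a biunit pair. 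Your preferred route --- proving the recovery formula first and then verifying $\mu_{bb}([x,y,z])=\mu_{bb}(z)\MiddleB{a}y\MiddleB{a}\mu_{bb}(x)$ directly --- also goes through: both sides reduce to $[[b,z,y],x,b]$ by para-associativity and the biunit identities, so there is no gap. The trade-off is that your route is self-contained within the lemma but essentially re-proves, in the special case $e=b$, the bracket manipulation already carried out in Lemma \ref{lem:imp:switch}, whereas the paper's citation replaces that computation with a one-line hypothesis check; either writeup is sound.
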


\begin{proof}
A biunit pair $(a,b)$ satisfy conditions in Lemma \ref{lem:imp:switch}, and therefore $\mu_{bb}$ is a switch for $(S,\MiddleB{a})$. Second claim is a simple observation that
$$x\MiddleB{a}\mu_{bb}(y)\MiddleB{a}z=[[x,a,[b,y,b]],a,z]=[[x,a,b],y,[b,a,z]]=[x,y,z].$$
Q.E.D.
\end{proof}

\begin{corollary}\label{cor:mainproof2}
If $(S,[-,-,-])$ is a semiheap with a biunit pair $(a,b)$, then $(S,\MiddleB{a})$ is a monoid with an identity $b$ and $\mu_{bb}(b)\not=b$ in general, but $\mu_{bb}(a)=b$. Thus, $\mu_{bb}$ is a bijection and $a$ is invertible by Proposition \ref{prop:sg:list} \eqref{lem:-sg:en:2} and \eqref{lem:-sg:en:6} with inverse $[b,b,b]$. If $\mu_{bb}(b)=b$, we get that $b$ is the inverse of $a$, and $a$ is a biunit.
\end{corollary}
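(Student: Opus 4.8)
The plan is to verify the assertions in turn, leaning on Lemma \ref{lem:mainproof1} and Proposition \ref{prop:sg:list}, so that almost everything reduces to identifying the right data. I would first confirm that $(S,\MiddleB{a})$ is a monoid with identity $b$. Associativity of $\MiddleB{a}$ falls straight out of the semiheap axiom \eqref{semiheap}, since $(x\MiddleB{a}y)\MiddleB{a}z=[[x,a,y],a,z]=[x,a,[y,a,z]]=x\MiddleB{a}(y\MiddleB{a}z)$. The right biunit condition $[x,a,b]=x$ gives $x\MiddleB{a}b=x$, and since Lemma \ref{lem:biunit:prop}\eqref{bi:eq1} shows $(b,a)$ is again a biunit pair we also have $[b,a,x]=x$, i.e.\ $b\MiddleB{a}x=x$; hence $b$ is a two-sided identity.

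Next I would record that $\mu_{bb}(a)=[b,a,b]=b$, applying $[x,a,b]=x$ with $x=b$. This is exactly the hypothesis of Proposition \ref{prop:sg:list} for the monoid $(S,\MiddleB{a},b)$, the switch $\varphi=\mu_{bb}$ (a switch by Lemma \ref{lem:mainproof1}), and the distinguished element $u=a$ with $\varphi(u)=b$. Part \eqref{lem:-sg:en:6} then yields that $\mu_{bb}$ is a bijection, and part \eqref{lem:-sg:en:2} yields that $a$ is invertible in $(S,\MiddleB{a})$ with inverse $\varphi(1)=\mu_{bb}(b)=[b,b,b]$.

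For the collapse case, suppose $\mu_{bb}(b)=b$. Then the inverse just computed is $a^{-1}=[b,b,b]=b$, so $b$ is the inverse of $a$. Since $b$ is simultaneously the identity, $a\MiddleB{a}b=a$ while $a\MiddleB{a}b=a\MiddleB{a}a^{-1}=b$, forcing $a=b$; thus $(a,a)=(a,b)$ is a biunit pair and $a$ is a Mal'cev element by the remark following Definition \ref{def:bp}. The clause that $\mu_{bb}(b)\neq b$ ``in general'' needs no argument, since Example \ref{exm:biunitpairsodd} already exhibits biunit pairs with $a\neq b$. I do not expect a real obstacle: the calculations are one-liners, and the only point demanding care is matching $(S,\MiddleB{a},b,\mu_{bb},a)$ to the quintuple $(S,\cdot,1,\varphi,u)$ of Proposition \ref{prop:sg:list} and remembering that the inverse from \eqref{lem:-sg:en:2} is $\varphi(1)=\mu_{bb}(b)$, not $\mu_{bb}(a)$.
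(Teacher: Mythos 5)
Your proposal is correct and follows essentially the same route the paper intends: the corollary is justified there by combining Lemma \ref{lem:mainproof1} with Proposition \ref{prop:sg:list} applied to the monoid $(S,\MiddleB{a},b)$, the switch $\mu_{bb}$, and $u=a$ (since $\mu_{bb}(a)=[b,a,b]=b$), exactly as you do. Your extra observation that $\mu_{bb}(b)=b$ forces $a=b$ is a correct and slightly more explicit derivation of the final clause.
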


\begin{proposition}
Let $S$ be a semiheap with two different biunit pairs $(a,b)$ and $(c,d)$. Then monoids $(S,\MiddleB{a},b)$ and $(S,\MiddleB{c},d)$ are isomorphic. Moreover, switches $\mu_{bb}$ and $\mu_{dd}$ are related by equation
$$
\lambda_{da}\circ \mu_{bb}=\mu_{dd}\circ \lambda_{ad}^{-1}.
$$
\end{proposition}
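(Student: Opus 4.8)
The plan is to treat the two assertions separately: first produce an explicit monoid isomorphism, then verify the switch identity by a direct para-associative computation. Throughout I will use Corollary~\ref{cor:mainproof2}, which guarantees that $(S,\MiddleB{a},b)$ and $(S,\MiddleB{c},d)$ are genuine monoids with identities $b$ and $d$, and Lemma~\ref{lem:mainproof1}, which guarantees that $\mu_{bb}$ and $\mu_{dd}$ are the associated switches. I will also repeatedly invoke Lemma~\ref{lem:biunit:prop}(\ref{bi:eq1}): since $(a,b)$ and $(c,d)$ are biunit pairs, so are $(b,a)$ and $(d,c)$, giving me the left laws $[a,b,x]=[b,a,x]=[c,d,x]=[d,c,x]=x$ and the right laws $[x,a,b]=[x,b,a]=[x,c,d]=[x,d,c]=x$.

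For the isomorphism I propose the left shift $\lambda_{da}\colon x\mapsto[d,a,x]$, with candidate inverse $\lambda_{bc}\colon x\mapsto[b,c,x]$. It preserves identities, since $\lambda_{da}(b)=[d,a,b]=d$ by the right-unit law for $(a,b)$. The homomorphism property $\lambda_{da}(x\MiddleB{a}y)=\lambda_{da}(x)\MiddleB{c}\lambda_{da}(y)$ reduces, after \eqref{semiheap}, to $[[d,a,x],a,y]=[[d,a,x],c,[d,a,y]]$; writing $w=[d,a,x]$ and using $[w,c,[d,a,y]]=[[w,c,d],a,y]=[w,a,y]$ (the last step being $[w,c,d]=w$) closes this. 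Bijectivity follows by checking $\lambda_{bc}\circ\lambda_{da}=\lambda_{da}\circ\lambda_{bc}=\mathrm{Id}$, each composite collapsing through \eqref{semiheap} and one right-unit law. The right shift $\rho_{ad}\colon x\mapsto[x,a,d]$ with inverse $\rho_{cb}$ works equally well and may be substituted.

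For the switch identity I first record that $\lambda_{ad}$ is invertible with $\lambda_{ad}^{-1}=\lambda_{cb}$: indeed $\lambda_{cb}(\lambda_{ad}(x))=[[c,b,a],d,x]=[c,d,x]=x$ using $[c,b,a]=c$, and symmetrically $\lambda_{ad}(\lambda_{cb}(x))=x$ using $[a,d,c]=a$. It then suffices to show that both sides of $\lambda_{da}\circ\mu_{bb}=\mu_{dd}\circ\lambda_{cb}$ agree on every $y$, and the strategy is to collapse each to the common expression $[d,y,b]$. On the left, $\lambda_{da}(\mu_{bb}(y))=[d,a,[b,y,b]]=[[d,a,b],y,b]=[d,y,b]$ by \eqref{semiheap} and $[d,a,b]=d$. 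On the right, $\mu_{dd}(\lambda_{cb}(y))=[d,[c,b,y],d]=[[d,y,b],c,d]=[d,y,b]$ by the middle para-associative equality of \eqref{semiheap} together with the right-unit law $[[d,y,b],c,d]=[d,y,b]$.

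The computations are entirely mechanical once the maps are chosen, so the only genuine obstacle is bookkeeping: selecting the correct shifts, pinpointing $\lambda_{ad}^{-1}=\lambda_{cb}$, and — the load-bearing observation — recognizing that the two sides of the switch identity both telescope to $[d,y,b]$, one via the third (right) equality in \eqref{semiheap} and the other via the middle equality. Keeping track of which of the eight unit laws is applied at each step, and which of the three equalities in \eqref{semiheap} is invoked, is the part most prone to error.
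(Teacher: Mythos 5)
Your proof is correct and follows essentially the same route as the paper: $\lambda_{da}$ (with inverse $\lambda_{bc}$) as the monoid isomorphism, the identification $\lambda_{ad}^{-1}=\lambda_{cb}$, and the collapse of both sides of the switch identity to $[d,y,b]$ via para-associativity and the unit laws. Your write-up is in fact slightly more careful than the paper's, which contains typographical slips in the homomorphism computation (writing $\MiddleB{c}$ where $\MiddleB{a}$ is meant) that your version avoids.
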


\begin{proof}
Let us check that $\lambda_{da}$ is an isomorphism of monoids. Indeed for all $x,y\in S$,
$$
\begin{aligned}
\lambda_{da}(x)\MiddleB{c}\lambda_{da}(y)&=[[d,a,x],c,[d,a,y]]=[d,a,x],[a,d,c],y]=[[d,a,x],c,y]\\ &=[d,a,[x,c,y]]=\lambda_{da}(x\MiddleB{c}y),\\
\lambda_{da}(b)&=[d,a,b]=d,\\
\lambda_{da}\circ \lambda_{bc}(x)&=[d,a,[b,c,x]]=x.
\end{aligned}
$$
For the second statement, observe that $\lambda_{cb}=\lambda_{ad}^{-1}$ as for all $x\in S$,
$$\lambda_{cb}\circ\lambda_{ad}(x)=[c,b,[a,d,x]]=[[c,b,a],d,x]=[c,d,x]=x.$$
Now, 
$$
\lambda_{da}\circ \mu_{bb}(x)=[d,a,[b,x,b]]=[d,x,b]=[d,x,[d,c,b]]=\mu_{dd}\circ \lambda_{cb}=\mu_{dd}\circ \lambda_{ad}^{-1}.
$$
Q.E.D.

\end{proof}

We are now ready to prove our main theorem extending the classic result by Wagner \cite[Theorem 2.8]{wagner1953theory} stated in Theorem \ref{monoidsemiheap}.

\begin{theorem}\label{thm:main}
Let $(S,\cdot,1)$ be a monoid and $\varphi$ be a bijective switch. Then a pair $(\varphi^{-1}(1),1)$ is a biunit pair in the semiheap $(S,[-,-,-]^{\varphi})$. Furthermore, every semiheap $S$ with a biunit pair $(a,b)$ is a monoid $(S,\MiddleB{a},b)$ with a bijective switch $\mu_{bb}$.
\end{theorem}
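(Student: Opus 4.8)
The plan is to treat the two assertions of the theorem separately, assembling the preparatory results of Sections~\ref{sec:t-w-s} and~\ref{sec:main}.

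For the forward direction, I would first invoke Lemma~\ref{lem:sg-to-sh}: since $\varphi$ is a switch, $(S,[-,-,-]^\varphi)$ is a semiheap. Writing $u:=\varphi^{-1}(1)$, so that $\varphi(u)=1$, the right biunit condition is immediate, since for all $x\in S$ we have $[x,u,1]^\varphi=x\cdot\varphi(u)\cdot 1=x$. The left biunit condition reads $[u,1,x]^\varphi=u\cdot\varphi(1)\cdot x$, so it holds precisely when $u\cdot\varphi(1)=1$; this is exactly Proposition~\ref{prop:sg:list}\eqref{lem:-sg:en:2}, which gives $\varphi(1)=u^{-1}$. Hence $(u,1)=(\varphi^{-1}(1),1)$ is a biunit pair.

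For the backward direction, the substance is already contained in Lemma~\ref{lem:mainproof1} and Corollary~\ref{cor:mainproof2}, and I would organise it as follows. First I check that $(S,\MiddleB{a},b)$ is a monoid: associativity of $\MiddleB{a}$ follows by applying \eqref{semiheap} to collapse $[[x,a,y],a,z]=[x,a,[y,a,z]]$, while $b$ is a two-sided identity because $x\MiddleB{a}b=[x,a,b]=x$ directly and $b\MiddleB{a}x=[b,a,x]=x$ after invoking Lemma~\ref{lem:biunit:prop}\eqref{bi:eq1} to pass to the biunit pair $(b,a)$. Lemma~\ref{lem:mainproof1} then shows $\mu_{bb}$ is a switch for this monoid (its proof feeds the biunit pair into Lemma~\ref{lem:imp:switch}, whose four hypotheses $[b,a,x]=[a,b,x]=[x,b,a]=[x,a,b]=x$ all hold). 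Finally $\mu_{bb}(a)=[b,a,b]=b$ exhibits $a$ as the preimage of the identity, so Proposition~\ref{prop:sg:list}\eqref{lem:-sg:en:6} forces $\mu_{bb}$ to be a bijection.

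Once the machinery of the previous sections is available, neither direction involves a single hard computation; the theorem is essentially an assembly. The step requiring the most care is keeping the symbol roles straight when applying Proposition~\ref{prop:sg:list} in the two directions: in the forward direction the distinguished element is $u=\varphi^{-1}(1)$ with identity $1$ and switch $\varphi$, whereas in the backward direction the monoid identity is $b$, the switch is $\mu_{bb}$, and the element playing the role of $u$ is $a$, with $\mu_{bb}(a)=b$. Managing these substitutions correctly is precisely what makes the invertibility claim in the forward direction and the bijectivity claim in the backward direction go through.
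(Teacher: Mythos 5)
Your proposal is correct and follows essentially the same route as the paper: Lemma~\ref{lem:sg-to-sh} for the semiheap structure, Proposition~\ref{prop:sg:list}\eqref{lem:-sg:en:2} to identify $\varphi(1)$ as the inverse of $\varphi^{-1}(1)$ in the biunit computation, and Lemma~\ref{lem:mainproof1} together with Corollary~\ref{cor:mainproof2} for the converse. The only (harmless) differences are presentational: you verify the right biunit condition directly rather than via Lemma~\ref{lem:biunit:prop}\eqref{bi:eq1}, and you spell out the monoid axioms for $(S,\MiddleB{a},b)$ that the paper leaves implicit in Corollary~\ref{cor:mainproof2}.
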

\begin{proof}
The second part is Lemma \ref{lem:mainproof1} and Corollary \ref{cor:mainproof2}. For the first part, $(S,[-,-,-]^\varphi)$ is a semiheap by Lemma \ref{lem:sg-to-sh}, so it is enough to check that $(\varphi^{-1}(1),1)$ is a biunit pair. Indeed, observe that for all $c\in S$,
$$
[\varphi^{-1}(1),1,c]^\varphi=\varphi^{-1}(1)\cdot\varphi(1)\cdot c=c=c\cdot \varphi(1)\cdot \varphi^{-1}(1)=[c,1,\varphi^{-1}(1)]^\varphi,
$$
where $\varphi(1)$ is the inverse of $\varphi^{-1}(1)$ by the Proposition \ref{prop:sg:list} \eqref{lem:-sg:en:2}.
By Lemma \ref{lem:biunit:prop} \eqref{bi:eq1} it is enough to check just this one equality.
\end{proof}

\begin{corollary}\label{correspondence}
There is a one-to-one correspondence between semiheaps with a chosen biunit pair and monoids with a bijective switch:
\begin{equation*}
    \{ \text{semiheap} + \text{biunit} \,\, \text{pair} \} \rightleftharpoons \{\text{monoid} +  \text{bijective} \,\, \text{switch}\}
\end{equation*}
which is realized explicitly via the assignments:
\begin{equation*}
(S,(a,b))\xmapsto{\Omega} (S,\MiddleB{a},b,\mu_{bb})\quad \text{and} \quad (M,\cdot,1,\varphi)\xmapsto{\Lambda} ((M,[-,-,-]^{\varphi}), (\varphi^{-1}(1),1)).
\end{equation*}
\end{corollary}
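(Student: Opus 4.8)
The plan is to recognize that almost all the substance of this correspondence is already packaged in Theorem \ref{thm:main}, so that only the mutual-inverse property remains. The first assertion of Theorem \ref{thm:main} shows that $\Lambda$ does land in the class of semiheaps carrying a distinguished biunit pair, while the second assertion (through Lemma \ref{lem:mainproof1} and Corollary \ref{cor:mainproof2}) shows that $\Omega$ lands in the class of monoids equipped with a bijective switch. Hence both assignments are well defined, and it suffices to check $\Lambda\circ\Omega=\mathrm{id}$ and $\Omega\circ\Lambda=\mathrm{id}$.

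For $\Lambda\circ\Omega=\mathrm{id}$, I would begin with a semiheap $(S,[-,-,-])$ with biunit pair $(a,b)$, so that $\Omega$ returns the monoid $(S,\MiddleB{a},b,\mu_{bb})$ with identity $b$, multiplication $\MiddleB{a}$, and switch $\mu_{bb}$. Feeding this into $\Lambda$ (with $b$ in the role of the identity, $\MiddleB{a}$ in the role of $\cdot$, and $\mu_{bb}$ in the role of $\varphi$) produces the semiheap $(S,[-,-,-]^{\mu_{bb}})$ together with the pair $(\mu_{bb}^{-1}(b),b)$. The recovered ternary operation agrees with the original one because $[x,y,z]^{\mu_{bb}}=x\MiddleB{a}\mu_{bb}(y)\MiddleB{a}z=[x,y,z]$, which is precisely the second claim of Lemma \ref{lem:mainproof1}; and the recovered pair is $(a,b)$ because $\mu_{bb}(a)=b$ with $\mu_{bb}$ a bijection, both supplied by Corollary \ref{cor:mainproof2}, so that $\mu_{bb}^{-1}(b)=a$.

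For $\Omega\circ\Lambda=\mathrm{id}$, I would begin with a monoid $(M,\cdot,1)$ and bijective switch $\varphi$, so that $\Lambda$ returns $((M,[-,-,-]^{\varphi}),(\varphi^{-1}(1),1))$. Applying $\Omega$ with $a=\varphi^{-1}(1)$ and $b=1$ yields $(M,\MiddleB{\varphi^{-1}(1)},1,\mu_{11})$, where the retract and the middle shift are now computed inside $[-,-,-]^{\varphi}$. A one-line computation recovers the multiplication, $x\MiddleB{\varphi^{-1}(1)}y=x\cdot\varphi(\varphi^{-1}(1))\cdot y=x\cdot y$, and the switch, $\mu_{11}(x)=1\cdot\varphi(x)\cdot 1=\varphi(x)$, while the identity $1$ is manifestly unchanged; thus the original data is returned verbatim.

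The main obstacle is not analytic but one of bookkeeping: one must carefully track which element, which binary operation, and which endofunction occupy the roles of $1$, $\cdot$, and $\varphi$ in the definition of $\Lambda$, and likewise the roles of $a$ and $b$ in $\Omega$, since both assignments relabel the distinguished data. Once the substitutions are made correctly, every verification collapses to a single application of the defining formula $[a,b,c]^{\varphi}=a\cdot\varphi(b)\cdot c$, the identity of Lemma \ref{lem:mainproof1}, and the invertibility statements of Corollary \ref{cor:mainproof2} and Proposition \ref{prop:sg:list}\eqref{lem:-sg:en:2}.
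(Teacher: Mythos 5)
Your proposal is correct and follows essentially the same route as the paper's own proof: both reduce the statement to verifying $\Lambda\circ\Omega=\mathrm{id}$ and $\Omega\circ\Lambda=\mathrm{id}$ via the identity $[x,y,z]^{\mu_{bb}}=x\MiddleB{a}\mu_{bb}(y)\MiddleB{a}z=[x,y,z]$ and the computations $x\MiddleB{\varphi^{-1}(1)}y=x\cdot y$, $\mu_{11}=\varphi$. The only cosmetic difference is that you recover $\mu_{bb}^{-1}(b)=a$ from $\mu_{bb}(a)=b$ in Corollary \ref{cor:mainproof2}, whereas the paper writes $\mu_{bb}^{-1}(b)=\mu_{aa}(b)=a$; these are equivalent.
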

\begin{proof}
It is enough to show that $\Lambda=\Omega^{-1}$. Indeed, let us consider 
$$\Lambda\circ \Omega (S,(a,b))=\Lambda(S,\MiddleB{a},b,\mu_{bb}).$$
Observe that for all $x,y,z\in S$, $$[x,y,z]^{\mu_{bb}}=x\MiddleB{a}\mu_{bb}(y)\MiddleB{a}z=[[x,a,[b,y,b]],a,z]=[x,y,z]$$
and 
$$\mu_{bb}^{-1}(b)=\mu_{aa}(b)=a,$$
where $[-,-,-]$ is the ternary operation of $S$. Thus $\Lambda(S,\MiddleB{a},b,\mu_{bb})=(S,(a,b))$, and $\Omega$ is a right inverse of $\Lambda$.

For the left inverse, we have  that $\Omega(\Lambda(M,\cdot,1,\varphi))=\Omega(M,[-,-,-]^{\varphi},(\varphi^{-1}(1),1))$. Let us denote $e:=\varphi^{-1}(1)$, observe that for all $x,y\in M$,
$$
a\MiddleB{e}b=a\cdot \varphi(e)\cdot b=a\cdot \varphi(\varphi^{-1}(1))\cdot b=a\cdot b,
$$
$1$ is the identity from the definition of $\Omega$, and 
$$
\mu_{11}(x)=1\cdot \varphi(x)\cdot 1=\varphi(x).
$$
Thus $\Omega((M,[-,-,-]^{\varphi}),(\varphi^{-1}(1),1))=(M,\cdot,1,\varphi)$, and $\Omega$ is a right inverse to $\Lambda$. Therefore $\Lambda$ is a bijection and we have one-to-one correspondence.
\end{proof}

These results motivate the definition of the family of semiheaps whose elements belong to biunit pairs.

\begin{definition}\label{diheap}
A semiheap $(D,[-,-,-])$ in which for every element $a\in D$ there exists $b\in D$ such that $(a,b)$ is a biunit pair, is called a \textbf{diheap}.
\end{definition}

\begin{example}\label{ex:abeliandiheap}
Any abelian group $(A,+)$ has an associated abelian semiheap structure $(A,[-,-,-])$ given by $[a,b,c]:=a+b+c$. The inverse map ensures that all elements $a\in A$ are in a pair $(a,-a)$ satisfying:
$$
[a,-a,x]=a-a+x=x=x+a-a=[x,a,-a]
$$
and thus $(A,[-,-,-])$ is a diheap, that otherwise fails to be a heap in general.
\end{example}

\noindent It follows from this definition that diheaps generalize heaps. In fact, there is a deep connection between diheaps and heaps via twisting.

\begin{proposition}\label{prop:equiv}
A diheap $(D,[-,-,-])$ is warp equivalent to a heap $D_\psi$, where $\psi$ is an involutive semiheap automorphism, and thus a warp.
\end{proposition}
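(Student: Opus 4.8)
The claim is that every diheap is warp equivalent to a heap via a single involutive warp $\psi$. Since warp equivalence is generated by bijective warps and, by Lemma~\ref{lem:biunit-pre}, preserves the property of having biunit pairs, the natural strategy is to find one explicit warp $\psi$ that simultaneously (i) is a bijection, (ii) is an involution, (iii) is a semiheap automorphism of $D$, and (iv) turns $D$ into a \emph{heap}, i.e.\ makes every element a Mal'cev element of $D_\psi$. The candidate for $\psi$ should be built from the biunit-partner assignment: by Lemma~\ref{lem:biunit:prop}\eqref{bi:eq2}, for each $a\in D$ the partner $b$ with $(a,b)$ a biunit pair is \emph{unique}, so we get a well-defined map $\psi\colon D\to D$ sending each element to its biunit partner. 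Lemma~\ref{lem:biunit:prop}\eqref{bi:eq1} says $(b,a)$ is again a biunit pair, which forces $\psi(\psi(a))=a$, so $\psi$ is an involution and in particular a bijection; this disposes of (i) and (ii) cheaply.

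\textbf{The plan of attack.}
First I would fix, for each $a\in D$, the unique $\psi(a)$ with $(a,\psi(a))$ a biunit pair, and record $\psi^2=\mathrm{Id}$ from part~\eqref{bi:eq1} of Lemma~\ref{lem:biunit:prop}. Next I would show $\psi$ is a warp, i.e.\ $\psi([a,\psi(b),c])=[\psi(a),b,\psi(c)]$; the idea is to verify that $[\psi(a),b,\psi(c)]$ is the biunit partner of $[a,\psi(b),c]$ by checking the defining identity~\eqref{biunit}, using the subsemiheap structure on biunit pairs from Lemma~\ref{lem:biunit:subset} together with uniqueness of partners. Because $\psi$ is a bijective warp, Lemma~\ref{lem:sgtosh} guarantees $D_\psi$ is a semiheap. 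Then I would prove that in $D_\psi$ every element $a$ is a Mal'cev element: unwinding $[a,a,x]_\psi=[a,\psi(a),x]=x$ and $[x,a,a]_\psi=[x,\psi(a),a]=x$, both of which hold precisely because $(a,\psi(a))$ is a biunit pair in $D$. This is the conceptual heart — the twist by $\psi$ exactly converts each ``$(a,\psi(a))$ is a biunit pair'' statement into ``$a$ is a biunit in $D_\psi$'', so $D_\psi$ is a heap by Definition~\ref{def:heap}. Finally, to establish (iii), that $\psi$ is a semiheap automorphism of $D$, I would again lean on uniqueness of partners to show $\psi([a,b,c])=[\psi(a),\psi(b),\psi(c)]$, completing the warp-equivalence claim since $\psi^{-1}=\psi$ is then also a warp.

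\textbf{Where the difficulty lies.}
The two routine-looking but genuinely load-bearing verifications are that $\psi$ is a warp and that $\psi$ is a homomorphism; both reduce to the assertion that a certain bracketed expression of partners is the partner of the corresponding bracket, and both must be extracted solely from generalized associativity~\eqref{semiheap} plus uniqueness of biunit partners, since in a general diheap we have no inverses or cancellation to fall back on. I expect the \emph{warp} identity to be the main obstacle: one wants to juggle the five-term associativity relations to relocate the $\psi$'s, and the cleanest route is probably to avoid manipulating $\psi$ directly and instead prove that $[\psi(a),b,\psi(c)]$ satisfies~\eqref{biunit} as the partner of $[a,\psi(b),c]$, thereby pinning it down by uniqueness rather than by a direct string of equalities. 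Once the warp and homomorphism properties are in hand, the heap conclusion and the involution/bijection properties follow immediately, and the warp equivalence $D\sim D_\psi$ is then just the one-step equivalence witnessed by the single bijective warp $\psi$.
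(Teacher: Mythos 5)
Your proposal is correct and follows essentially the same route as the paper: define $\psi$ as the (unique) biunit-partner map, get involutivity from Lemma~\ref{lem:biunit:prop}, show the bracket of partners is the partner of the bracket, and observe that twisting by $\psi$ turns each biunit pair $(a,\psi(a))$ into the statement that $a$ is a Mal'cev element of $D_\psi$. The only cosmetic difference is that you pin down $\widehat{[a,b,c]}=[\hat a,\hat b,\hat c]$ by invoking Lemma~\ref{lem:biunit:subset} plus uniqueness, whereas the paper redoes the corresponding associativity computation directly; both are the same argument.
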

\begin{proof}
Let $a,b,c,d\in D$ and let us denote by $\hat{a}$, a second component of a biunit pair, that is an element of $D$ such that $(a,\hat{a})$ is a biunit pair. Then, from Lemma \ref{lem:biunit:prop} we get that the map $\psi(a):=\hat{a}$ is a well-defined function and an involution. The only thing to check is that $\psi$ is a warp. Observe that 
$$
[[a,b,c],[\hat{a},\hat{b},\hat{c}],d]=[[a,b,[c,\hat{c},\hat{b}]],\hat{a},d]=d.
$$
Thus $\widehat{[a,b,c]}=[\hat{a},\hat{b},\hat{c}]$ and
$$
\psi([a,\psi(b),c])=\widehat{[a,\hat{b},c]}=[\hat{a},\hat{\hat{b}},\hat{c}]=[\hat{a},b,\hat{c}]=[\psi(a),b,\psi(c)].
$$
Therefore $\psi$ is a warp, and one can easily check that every element in $D_\psi$ is a biunit. Thus, $D$ is warp equivalent to a heap.
\end{proof}

\noindent Note that if the diheap $(D,[-,-,-])$ in Proposition \ref{prop:equiv} above is a heap, i.e. all biunit pairs are of the form $(a,a)$ for $a\in D$, then the induced involutive semiheap automorphism is simply the identity $\psi=\text{id}_D$. This suggest that heaps and diheaps are distinct classes of semiheaps, as we can indeed confirm.

\begin{proposition}
Considered as semiheaps, a diheap is isomorphic to a heap if and only if it is, itself, a heap.
\end{proposition}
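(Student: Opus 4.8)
The plan is to prove the nontrivial implication — that a diheap isomorphic to a heap is itself a heap — since the converse is immediate: a heap is isomorphic to itself via the identity, which is a semiheap isomorphism. The real purpose of the statement is to contrast genuine isomorphism with the warp-equivalence of Proposition \ref{prop:equiv}. A diheap is always warp-equivalent to a heap, but this proposition asserts that warp-equivalence does not upgrade to isomorphism unless the diheap was already a heap. So the content lies entirely in transporting the heap property back along an isomorphism.

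First I would record that biunit pairs are preserved, in both directions, by any semiheap isomorphism. Concretely, let $f\colon D\to H$ be a semiheap isomorphism with $H$ a heap. Since $f$ intertwines the ternary brackets and is bijective, applying $f$ to $[a,b,x]=x$ yields $[f(a),f(b),f(x)]_H=f(x)$; surjectivity lets $f(x)$ range over all of $H$, so $(f(a),f(b))$ is a left biunit pair in $H$, and the analogous computation handles the right condition, while injectivity gives the reverse implication. Thus $(a,b)$ is a biunit pair in $D$ if and only if $(f(a),f(b))$ is a biunit pair in $H$.

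Next I would exploit the defining feature of the heap $H$: every $h\in H$ is Mal'cev, so $(h,h)$ is a biunit pair, and by uniqueness of the second component (Lemma \ref{lem:biunit:prop}(\ref{bi:eq2})) it is the \emph{only} biunit pair with first entry $h$. Now take an arbitrary $a\in D$; since $D$ is a diheap (Definition \ref{diheap}) there is $\hat a$ with $(a,\hat a)$ a biunit pair, whence $(f(a),f(\hat a))$ is a biunit pair in $H$. Uniqueness in $H$ forces $f(\hat a)=f(a)$, and injectivity of $f$ gives $\hat a=a$. Hence $(a,a)$ is a biunit pair for every $a\in D$, i.e. every element of $D$ is Mal'cev, so $D$ is a heap by Definition \ref{def:heap}.

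There is no genuine computational obstacle here; the argument is transport of structure along the isomorphism combined with the uniqueness of biunit partners. The point that deserves care — and which is the conceptual heart of the statement — is to make explicit why the isomorphism hypothesis is strictly stronger than warp-equivalence: the involution $\psi(a)=\hat a$ of Proposition \ref{prop:equiv} measures the failure of $D$ to be a heap, and an isomorphism onto a heap lets us pull back the triviality of this involution (forced in $H$ by uniqueness) all the way to $D$, whereas a mere warp only reshuffles the bracket and need not fix $\psi$.
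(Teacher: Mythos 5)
Your proof is correct and follows essentially the same route as the paper's: both arguments transport biunit pairs along the (surjective) isomorphism and then invoke the uniqueness of the biunit partner together with the fact that in a heap $(h,h)$ is already a biunit pair; the paper merely phrases the forward direction contrapositively (a non-heap diheap forces $\varphi(a)=\varphi(b)$ with $a\neq b$, contradicting injectivity). Your version makes the appeal to Lemma \ref{lem:biunit:prop}(\ref{bi:eq2}) explicit where the paper leaves it implicit, but the mathematical content is identical.
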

\begin{proof}
Firstly, observe that any surjective homomorphism $\varphi:D\to S$, for some semiheaps $D$ and $S$, preserve biunit pairs. Indeed, if $(a,b)$ is a biunit pair in $D$, then for all $y\in S$ exists $x\in D$ such that $\varphi(x)=y$ and 
$$
[\varphi(a),\varphi(b),y]=\varphi([a,b,x])=\varphi(x)=y=\varphi(x)=\varphi([x,a,b])=[y,\varphi(a),\varphi(b)].
$$
Thus $(\varphi(a),\varphi(b))$ is a biunit pair in $S$. Now, if $D$ is a diheap which is not a heap, i.e. there exists at least one biunit pair $(a,b)$ such that $a\neq b$, and $S$ is a heap, it follows that $\varphi(a)=\varphi(b)$ thus $\varphi$ fails to be injective. Conversely, if diheap is a heap, it is isomorphic to itself.
\end{proof}

Diheaps thus appear as a well-motivated novel class of semiheaps that strictly generalizes heaps. This poses some intriguing lines of further enquiry. Although we have shown that diheaps are generally non-isomorphic to heaps, one can ask whether there exists a different category of semiheaps where warp equivalence corresponds to isomorphism. It seems plausible that the diheaps induced from invertible structures as in Example \ref{ex:abeliandiheap} are essentially the only class of non-heap diheaps. Following Theorem \ref{thm:main}, diheaps correspond to a particular class of switch monoids, it will be interesting to consider whether such class of algebras has been identified before or whether it may have some application in semigroup research.

\section*{Acknowledgements}
\noindent The research of B. Rybo\l owicz is supported by the EPSRC research grant EP/V008129/1. We would like to thank Mark Verus Lawson, Andr\'es Ortiz-Mu\~noz and Jos\'e Figueroa-O'Farrill for all the feedback and helpful conversations.

\printbibliography
\end{document}